\definecolor{chianti}{rgb}{0.6,0,0}
\definecolor{meretale}{rgb}{0,0,.6}
\definecolor{leaf}{rgb}{0,.35,0}
\DeclareMathOperator{\height}{ht}
\DeclareMathOperator{\ann}{ann}
\DeclareMathOperator{\Hom}{Hom}
\DeclareMathOperator{\initial}{in}
\DeclareMathOperator{\Ass}{Ass}
\newtheorem*{thm*}{Theorem}
\newtheorem{thm}{Theorem}[section]
\newtheorem{lem}[thm]{Lemma}
\newtheorem{cor}[thm]{Corollary}
\newtheorem{prop}[thm]{Proposition}
\newtheorem{rem}[thm]{Remark}
\theoremstyle{definition}
\newtheorem{defn}[thm]{Definition}
\begin{document}
\title{Linkage and $F$-Regularity of determinantal rings}

\author{Vaibhav Pandey}
\address{Department of Mathematics, Purdue University, 150 N University St., West Lafayette, IN~47907, USA}
\email{pandey94@purdue.edu}

\author{Yevgeniya Tarasova}
\address{Department of Mathematics, Purdue University, 150 N University St., West Lafayette, IN~47907, USA}
\email{ytarasov@purdue.edu}

\subjclass[2010]{Primary 13C40, 13A35; Secondary 14M06, 14M10, 14M12.}
\keywords{generic linkage; determinantal ring; strongly $F$-regular; generic residual intersection.}

\begin{abstract}
In this paper, we prove that the generic link of a generic determinantal ring defined by maximal minors is strongly $F$-regular. In the process, we strengthen a result of Chardin and Ulrich in the graded setting. They showed that the generic residual intersections of a complete intersection ring with rational singularities again have rational singularities. We show that if the said complete intersection is defined by homogeneous elements and is $F$-rational, then in fact, its generic residual intersections are strongly $F$-regular in positive prime characteristic.

Hochster and Huneke showed that determinantal rings are strongly $F$-regular; however, their proof is quite involved. Our techniques allow us to give a new and simple proof of the strong $F$-regularity of determinantal rings defined by maximal minors. 
\end{abstract}

\maketitle

\section{Introduction}

The modern study of linkage, or liaison theory, started with the work of Peskine and Szpiro in \cite{PS} and has been developed extensively by Huneke and Ulrich in \cite{HU1}, \cite{HU2}, and \cite{HU3}. Given proper ideals $I$ and $J$ in a polynomial ring $R$ over a field, we say that $R/I$ and $R/J$ are \emph{linked} if there exists a regular sequence $\mathfrak{a}$ in $R$ such that \[I = \mathfrak{a} : J \qquad \text{and} \qquad J = \mathfrak{a} : I. \] If the ideals $I$ and $J$ do not share any irreducible component, $R/J$ is said to be a \emph{geometric link} of $R/I$. In other words, the set theoretic union of the varieties defined by $I$ and $J$ (in the affine or projective space) is a complete intersection. When this complete intersection is chosen in the most general manner, that is, when the generators of $\mathfrak{a}$ are taken to be generic combinations of the generators of $I$, then $R/J$ is said to be the \emph{generic link} of $R/I$. Since the generic link is a deformation of any other link, homological properties of any link can be traced from those of generic links (see \cite[Proposition 2.14]{HU2}). However, given a singularity on the variety defined by $I$, not much is known about the singularities of its generic link despite the rapid development in the theory of singularities of algebraic varieties over the past few decades. In this paper, we prove


\begin{thm*} \label{Theorem A} (Theorem \ref{theoremGenericSFR})
     Let $X = (x_{i,j})$ be a $t \times n$ matrix of indeterminates for $n\geq t$, $K$ a field, and $R = K[X]$. Let $I_t(X)$ denote the ideal of $R$ generated by the size $t$ minors of $X$. Then, 
     \begin{enumerate}
         \item If $K$ has characteristic zero, the generic link of $R/I_t(X)$ has rational singularities.
         
         \item If $K$ is an $F$-finite field of positive characteristic, the generic link of $R/I_t(X)$ is strongly $F$-regular.
     \end{enumerate}
\end{thm*}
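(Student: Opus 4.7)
The argument proceeds by realizing the generic link of $R/I_t(X)$ as a generic residual intersection of an $F$-rational complete intersection, then applying the strengthening of Chardin--Ulrich proved earlier in this paper (in positive characteristic) or the original Chardin--Ulrich theorem (in characteristic zero).

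\textbf{Setup.} Put $g = n - t + 1 = \height I_t(X)$, and write $\delta_I$ for the size-$t$ minor of $X$ on column set $I \in \binom{[n]}{t}$. Introduce a matrix $U = (u_{I,k})$ of fresh indeterminates of shape $\binom{n}{t} \times g$, set $a_k = \sum_I u_{I,k} \delta_I$, form the complete intersection $\mathfrak{a} = (a_1, \ldots, a_g) \subset R[U]$, and let $J = \mathfrak{a} : I_t(X) R[U]$. The generic link is then $R[U]/J$.

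\textbf{Step 1: Exhibit the generic link as a generic residual intersection of a CI.} Classical residual intersection theory (Artin--Nagata, Huneke--Ulrich) realizes the generic determinantal ring $R/I_t(X)$ as a generic residual intersection of the regular sequence $(y_1, \ldots, y_t) \subset K[y_1, \ldots, y_t]$: adjoining the generic $t \times n$ matrix $X$ and forming the $n$ generic combinations $\sum_i x_{i,j} y_i$ of the $y_i$, the resulting residual intersection of $(y_1, \ldots, y_t)$ recovers (up to faithfully flat extension) the ring $R/I_t(X)$. Adjoining one further round of generic combinations via $U$ corresponds to a larger generic residual intersection of the same complete intersection, and this enlarged residual intersection is exactly the generic link $R[U]/J$.

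\textbf{Step 2: Verify the Chardin--Ulrich hypotheses and conclude.} The complete intersection $(y_1, \ldots, y_t) \subset K[y_1, \ldots, y_t]$ is generated by a homogeneous regular sequence, and its quotient is simply $K$, which trivially has rational singularities and is $F$-rational. Hence both the original Chardin--Ulrich theorem and its strengthening apply to its generic residual intersections. In positive characteristic, the strengthening established earlier in the paper gives strong $F$-regularity for the generic residual intersection identified in Step 1, hence for $R[U]/J$. In characteristic zero, the original Chardin--Ulrich theorem yields rational singularities.

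\textbf{Main obstacle.} The heart of the argument is Step 1: exhibiting the generic link of $R/I_t(X)$ as a generic residual intersection of the CI $(y_1, \ldots, y_t)$ in a form that fits the Chardin--Ulrich framework. This requires a Cauchy--Binet-type identification of the generic coefficients $u_{I,k}$ with induced generic combinations of the $y_i$, together with a careful bookkeeping of polynomial ring extensions and colon ideals to verify the two constructions produce the same quotient ring (or at least one from which the desired properties descend). Matching the height count ($g = n - t + 1$ new residual steps) against the expected grade of the residual intersection, and ensuring strong $F$-regularity survives the relevant base change, are the technical points that must be pinned down.
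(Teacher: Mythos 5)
There is a genuine gap, and it sits exactly where you flag your ``main obstacle'': Step 1 is not established, and I do not believe it can be. A generic $s$-residual intersection of the complete intersection $(y_1,\ldots,y_t)\subset K[y_1,\ldots,y_t]$ has, by \cite[Example 3.4]{HU4}, the explicit form $\mathfrak{a}_s+I_t(V)$ where $V$ is a \emph{generic} $s\times t$ matrix and $\mathfrak{a}_s$ is generated by the entries of $V[y_1\dots y_t]^T$; it introduces $st$ fresh variables and the colon is taken against the image of $(y_1,\ldots,y_t)$. The generic link $J=\mathfrak{a}:I_t(X)R[U]$ is a completely different construction: the colon is taken against $I_t(X)$ (an ideal with $\binom{n}{t}$ generators, not a complete intersection for $t\geq 2$), it introduces $(n-t+1)\binom{n}{t}$ fresh variables, and by linkage duality $J=\mathfrak{a}+(\text{a lift of }\omega_{R[U]/I_t(X)R[U]})$, which is a divisorial ideal of the determinantal ring with no presentation of the form $\mathfrak{a}'+I_t(V)$ for a generic $V$. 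There is no composition law that turns ``a generic link of a generic residual intersection of a CI'' back into ``a generic residual intersection of that CI,'' and the Cauchy--Binet identification you hope for would have to match generic coefficients indexed by $t$-subsets of $[n]$ with generic combinations of only $t$ elements --- these parameter counts do not line up. Your Step 2 is also circumscribed by this: Theorem \ref{theoremGenCU} applies only to generic residual intersections of ideals generated by regular sequences, which is precisely why the paper cannot (and does not) deduce the present theorem from it for $t\geq 2$. Note also the paper's warning in \S\ref{Sec:LinkageSingularity} that generic residual intersections of $I_t(X)$ for $t\geq 2$ which are not links fail to be Cohen--Macaulay, a further sign that the residual-intersection framework does not absorb this case.

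Your argument does correctly handle $t=1$, where the generic link of $(x_1,\ldots,x_n)$ really is the generic $n$-residual intersection of the maximal ideal and Theorem \ref{theoremMaximalSFR} applies. For $t\geq 2$ the paper instead argues by induction on $t$ using Glassbrenner's criterion with $s=x_{1,n}$: localizing at $x_{1,n}$ and using Lemma \ref{lemmaInvertx11} identifies $(S/J)_{x_{1,n}}$ with a smooth extension of the generic link of $I_{t-1}$ of a $(t-1)\times(n-1)$ matrix (the inductive hypothesis), and the splitting condition $x_{1,n}(J^{[p]}:J)\not\subseteq\mathfrak{m}^{[p]}$ is reduced via Lemma \ref{lemmaMain} (the containment $\mathfrak{a}^{[p]}:\mathfrak{a}\subseteq J^{[p]}:J$) to the explicit initial-term computation of Lemma \ref{lemmaNotInGeneric}. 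To repair your proof you would need to supply some such inductive or direct argument in place of Step 1; as written, the key identification is asserted, not proved, and the theorem does not follow.
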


The above theorem readily implies that if $I$ is an ideal of maximal minors which has generic height, then \emph{any} link of $I$ has a deformation which is strongly $F$-regular.


Major progress towards understanding the behavior of singularities under linkage was made by Chardin and Ulrich in \cite[Proposition 3.4]{CU}. They proved that if $R/I$ is a complete intersection ring with rational singularities, then the generic residual intersections of $R/I$ (a generalization of generic links) also have rational singularities. This result has been vastly extended in characteristic zero in \cite{Niu} (see also \cite{MPGTZ}). We prove that if the ideal $I$ is generated by a regular sequence of homogeneous elements and $R/I$ is $F$-rational then the generic residual intersections of $R/I$ are, in fact, strongly $F$-regular. 

\begin{thm*}\label{Theorem B}(Theorem \ref{theoremGenCU})
     Let $R = K[x_1, \ldots, x_n]$ be a polynomial ring over a field $K$ and $I$ be an ideal of $R$ generated by a regular sequence of homogeneous elements. Then,
     \begin{enumerate}
         \item If $K$ has characteristic zero and $R/I$ has rational singularities, then the generic residual intersections of $R/I$ also have rational singularities.
         
         \item If $K$ is an $F$-finite field of positive characteristic and $R/I$ is $F$-rational, then the generic residual intersections of $R/I$ are strongly $F$-regular.
     \end{enumerate}
\end{thm*}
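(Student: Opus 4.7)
My plan is to adapt the argument of \cite{CU} to positive characteristic. Part (1) is essentially their result (possibly with minor modifications to accommodate the homogeneous setting); the substantive new content is part (2), which I would split into two stages --- first $F$-rationality of the generic residual intersection, then an upgrade to strong $F$-regularity using the graded hypothesis.

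\emph{Setup.} Write $I = (f_1, \ldots, f_g)$ with the $f_i$ homogeneous of a regular sequence. Introduce $sg$ new indeterminates $z_{ij}$ and form $R' := R[z_{ij}]$. Set $a_i := \sum_j z_{ij} f_j$, $\mathfrak{a} := (a_1, \ldots, a_s)R'$, and $J := \mathfrak{a} :_{R'} IR'$. Assigning the $z_{ij}$ positive degrees so that each $a_i$ is homogeneous makes $J$ a homogeneous ideal, and $R'/J$ becomes an $\mathbb{N}$-graded, $F$-finite $K$-algebra with $(R'/J)_0 = K$.

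\emph{$F$-rationality.} Because $I$ is a complete intersection, it is strongly Cohen-Macaulay and satisfies the $G_\infty$ condition; the Huneke-Ulrich theory of residual intersections then guarantees that $R'/J$ is Cohen-Macaulay of codimension $s$. For $F$-rationality, I would mirror the Chardin-Ulrich argument via the exact sequence
\[
0 \longrightarrow J/\mathfrak{a} \longrightarrow R'/\mathfrak{a} \longrightarrow R'/J \longrightarrow 0,
\]
combined with the Huneke-Ulrich identification of $J/\mathfrak{a}$ as a symmetric (or exterior) power of $I/\mathfrak{a}$ controlled by $R/I$. The Frobenius is compatible with this identification, so the hypothesis that $R/I$ is $F$-rational --- in fact strongly $F$-regular, since a complete intersection is Gorenstein --- descends through the sequence and through the polynomial extension in the $z_{ij}$ to give $F$-rationality of $R'/J$.

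\emph{Upgrade to strong $F$-regularity.} Here I would exploit the graded structure of $R'/J$. The canonical module $\omega_{R'/J}$ admits an explicit Huneke-Ulrich description as a symmetric power of $I/\mathfrak{a}$ twisted by $\omega_{R/I}$, which is itself free of rank one. This should give enough structural control --- for instance, to check that $R'/J$ is $\mathbb{Q}$-Gorenstein, or to pin down the $a$-invariant --- so as to invoke a standard $F$-singularity criterion (e.g., $F$-rational plus $\mathbb{Q}$-Gorenstein implies strongly $F$-regular) to finish the proof. The principal obstacle is precisely this canonical-module analysis: controlling $\omega_{R'/J}$ sharply enough in the residual (rather than linkage) case $s > g$, where the ring need not be Gorenstein, is where the main technical effort will lie; once it is in place, the remaining implications are standard $F$-singularity machinery.
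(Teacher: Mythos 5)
Your outline has a genuine gap at the final ``upgrade'' step, and that is the step carrying all the difficulty. The criterion you want to invoke --- $F$-rational plus $\mathbb{Q}$-Gorenstein implies strongly $F$-regular --- is not a theorem; the standard statement (Theorem \ref{TheoremF-singularities}(3)) requires the ring to be Gorenstein. The expected dictionary with characteristic zero ($F$-rational $\leftrightarrow$ rational, strongly $F$-regular $\leftrightarrow$ klt) already indicates such a criterion cannot hold, since there are $\mathbb{Q}$-Gorenstein rational singularities that are not log terminal (e.g.\ rational surface singularities that are not quotient singularities). Moreover, for $s>g$ the residual intersection $R'/J$ is genuinely non-Gorenstein, with $\omega_{R'/J}$ a higher symmetric power of $I/\mathfrak{a}$, and you give no argument that this class is torsion in the divisor class group, so even the $\mathbb{Q}$-Gorenstein hypothesis is unverified. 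The $F$-rationality stage is also only a sketch: Chardin--Ulrich's argument is a characteristic-zero duality argument, and ``Frobenius is compatible with the exact sequence'' does not by itself control tight closure of parameter ideals in $R'/J$.

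The paper avoids both problems by never passing through $F$-rationality of $R'/J$ at all. It first proves the case $I=\mathfrak{m}$ directly (Theorem \ref{theoremMaximalSFR}) using Glassbrenner's criterion: an explicit maximal regular sequence inside $J$ (Lemma \ref{lemmaRegularSecMax}), together with Corollary \ref{corMain} and an initial-term computation (Lemma \ref{lemmaNotInMax}), produces the required splitting. The general case then follows by base change along the faithfully flat map $K[y_1,\ldots,y_t]\to R$, $y_i\mapsto f_i$ (flat because the $f_i$ form a regular sequence), which carries the generic residual intersection of the maximal ideal of $K[y_1,\ldots,y_t]$ to that of $I$ with closed fiber $R/I$; Aberbach's flat-ascent theorem (Theorem \ref{TheoremFlatAscent}) then yields strong $F$-regularity, using that the closed fiber is a Gorenstein $F$-rational ring and hence strongly $F$-regular. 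To salvage your approach you would need to replace the $\mathbb{Q}$-Gorenstein criterion by an actual splitting argument of this kind.
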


The bulk of the work for the above result is done in the case where $I$ is the homogeneous maximal ideal of $R$ (see Theorem \ref{theoremMaximalSFR}). Surprisingly, the techniques that we develop allow us to give a new and simple proof of the fact that generic determinantal rings defined by maximal minors are strongly $F$-regular in positive characteristic (see Theorem \ref{theoremDeterminantalSFR}). The $F$-regularity of determinantal rings was proved by Hochster and Huneke in \cite[\S 7]{HH2} using the following criterion; we briefly review the key ingredients of their proof. 

\begin{thm*}\cite[Fedder-Watanabe's criterion]{FW}
    Let $R$ be an $\mathbb{N}$-graded ring over a field of positive characteristic with homogeneous maximal ideal $\mathfrak{m}$. Then $R$ is $F$-rational if and only if
    \begin{enumerate}
        \item $R$ is Cohen-Macaulay.
        \item The localization $R_{\mathfrak{p}}$ is $F$-rational for all homogeneous prime ideals $\mathfrak{p} \neq \mathfrak{m}$.
        \item The $a$-invariant of $R$ is negative.
        \item $R$ is $F$-injective.
    \end{enumerate}
\end{thm*}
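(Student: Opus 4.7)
The plan is to split the biconditional into the two implications, with essentially all of the content in the reverse direction. For the forward direction, (1) is part of the definition of $F$-rationality, while (4) follows from the containment $0^F_{H^d_\mathfrak{m}(R)} \subseteq 0^*_{H^d_\mathfrak{m}(R)}$: since $R$ is Cohen--Macaulay only the top local cohomology is nonzero, and it has no phantom elements, so Frobenius acts injectively on every $H^i_\mathfrak{m}(R)$. Condition (2) is the standard fact that $F$-rationality passes to localizations. The only nontrivial item is (3): arguing by contrapositive, if $a(R) \geq 0$ pick a nonzero homogeneous $\eta \in [H^d_\mathfrak{m}(R)]_{a(R)}$; then for any homogeneous $c \in R^\circ$ of positive degree, $c F^e(\eta)$ sits in degree $\deg c + p^e \, a(R) > a(R)$ and so vanishes, forcing $\eta \in 0^*_{H^d_\mathfrak{m}(R)}$ and contradicting $F$-rationality.

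For the reverse direction, the Cohen--Macaulay hypothesis (1) reduces $F$-rationality of $R$ to the equality $0^*_{H^d_\mathfrak{m}(R)} = 0$, which I will establish. The first step uses (2) together with the identification of the $F$-rational locus of $R$ with the complement of $V(\tau(R))$, where $\tau(R) = \ann_R(0^*_{H^d_\mathfrak{m}(R)})$ is the (non-finitistic) test ideal; this forces $\tau(R)$ to be $\mathfrak{m}$-primary, so $\mathfrak{m}^N \subseteq \tau(R)$ for some $N \geq 1$, and hence $0^*_{H^d_\mathfrak{m}(R)} \subseteq (0 :_{H^d_\mathfrak{m}(R)} \mathfrak{m}^N)$. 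By graded local duality, this colon submodule is the graded $K$-dual of the finite-length module $\omega_R / \mathfrak{m}^N \omega_R$, and is therefore concentrated in a bounded window of degrees, say $[-B_2, -B_1]$.

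To conclude, suppose for contradiction that $\eta \in 0^*_{H^d_\mathfrak{m}(R)}$ is homogeneous, nonzero, of degree $n$. By (3), $n \leq a(R) < 0$. The submodule $0^*_{H^d_\mathfrak{m}(R)}$ is Frobenius-stable, so $F^e(\eta) \in [0^*_{H^d_\mathfrak{m}(R)}]_{n p^e}$ for every $e$, and by (4) each $F^e(\eta)$ is nonzero. But $n p^e \to -\infty$, so for large $e$ the degree $n p^e$ lies below $-B_2$, contradicting the previous paragraph. Hence $0^*_{H^d_\mathfrak{m}(R)} = 0$ and $R$ is $F$-rational.

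I expect the main obstacle to be assembling the graded tight closure inputs cleanly: the identification of the $F$-rational locus with $\Spec R \setminus V(\tau(R))$, and the application of graded local duality that bounds the degrees of $(0 :_{H^d_\mathfrak{m}(R)} \mathfrak{m}^N)$ in terms of the canonical module. Both are standard for $\mathbb{N}$-graded rings over a field, but they are what make the final ``Frobenius pushes elements out of a bounded window'' pigeonhole rigorous.
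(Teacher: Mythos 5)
The paper does not actually prove this statement: it is quoted verbatim from Fedder--Watanabe \cite{FW} as background for the Hochster--Huneke proof of $F$-regularity of determinantal rings, so there is no in-paper argument to compare yours against. Judged on its own, your proof is correct and is in fact essentially the original Fedder--Watanabe argument. The forward direction is handled the standard way, and your contrapositive for condition $(3)$ is right: for $a(R)\geq 0$ a top-degree socle-type element $\eta$ satisfies $\deg\bigl(cF^e(\eta)\bigr)=\deg c+p^e a(R)>a(R)$ for any homogeneous $c\in R^{\circ}$ of positive degree, so $\eta\in 0^*_{H^d_{\mathfrak m}(R)}$ (this needs $\dim R>0$ to produce such a $c$; in dimension zero the statement itself degenerates, since a field is $F$-rational with $a=0$). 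For the converse, the ``Frobenius pushes a nonzero element of a Frobenius-stable, finite-length, negatively graded submodule out of its bounded degree window'' pigeonhole is exactly the right mechanism, and conditions $(1)$--$(4)$ enter precisely where you use them.

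You have correctly located where the genuine content sits, and those inputs are theorems rather than gaps, but be aware they are not free: (i) Smith's identification of $F$-rationality for Cohen--Macaulay (graded, excellent) rings with $0^*_{H^d_{\mathfrak m}(R)}=0$; (ii) the fact that $0^*_{H^d_{\mathfrak m}(R)}$ is a \emph{graded} submodule (so that one may take $\eta$ homogeneous and $\ann_R(0^*)$ homogeneous, which is what lets you test condition $(2)$ only at homogeneous primes); and (iii) the deduction from condition $(2)$ that $0^*_{H^d_{\mathfrak m}(R)}$ is killed by a power of $\mathfrak m$, which rests on the openness of the $F$-rational locus and hence on test-element theory for excellent rings. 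Given (iii), your appeal to graded local duality is slightly more than needed: any finite-length graded module over an $\mathbb N$-graded ring with $[R]_0$ a field is a finite-dimensional vector space and hence concentrated in finitely many degrees.
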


The Cohen-Macaulay property for determinantal rings defined by maximal minors was established using the Eagon-Northcott resolution in \cite{EN} and later in \cite{HE} for minors of all sizes using the technique of principal radical systems. Condition $(2)$ is proved by induction on the size of minors. The $a$-invariant of generic determinantal rings has been computed explicitly (see, for example, \cite{Gr}); alternatively it can be recovered from the main result of \cite{HE}. Condition $(4)$ is argued using the fact that generic determinantal rings are algebras with straightening laws. Alternatively, Sturmfels (\cite{S}) and Caniglia et. al. (\cite{CGG}) independently prove condition $(4)$ by arguing that the minors form a Gr\"obner basis of the determinantal ideal using techniques from algebraic combinatorics. 

Our proof of the $F$-regularity of determinantal rings is ``simple" as the only ingredient used beyond elementary results in linkage theory is Glassbrenner's criterion for $F$-regularity (see Theorem \ref{theoremGlassbrenner}, \S 2). 

The paper is organized as follows: In \S \ref{Sec:Prelims}, we discuss the background on linkage and singularities in positive characteristic needed for our results. In \S \ref{Sec:MainLemma}, we discuss the key tools used to prove our main results. In \S \ref{Sec:MaxMinors}, we prove the $F$-regularity of generic determinantal rings defined by maximal minors. \S \ref{Sec:ResIntSingularity} is dedicated to proving Theorem \ref{theoremGenCU}; we conclude by proving Theorem \ref{theoremGenericSFR}, our main result, in \S \ref{Sec:LinkageSingularity}.

\section{Preliminaries}\label{Sec:Prelims}

\subsection{Linkage and Residual Intersections} 

In this subsection, ``ideal'' will always mean a proper ideal. The symbol $[(a_{i,j})]^T$ is used to denote the transpose of the matrix $[(a_{i,j})]$. 

\begin{defn}\label{definitionLinkage}
    Let $R$ be a Cohen-Macaulay ring, and let $I$ and $J$ be ideals of $R$. We say that $I$ and $J$ are \emph{linked} (or $R/I$ and $R/J$ are linked) if there exists an ideal $\mathfrak{a}$ of $R$ generated by a regular sequence such that \[J = \mathfrak{a}:I \qquad \text{and} \qquad I = \mathfrak{a}:J\] and use the notation $I \sim _{\mathfrak{a}} J$. Furthermore we say that the link is \emph{geometric} if we have $\height(I+J) \geq \height(I)+1$, where $\height(-)$ denotes the height of an ideal of $R$. 
\end{defn}

It is clear that the ideal $\mathfrak{a}$ is contained in $I$ and $J$. Note that the associated primes of $I$ and $J$ have the same height, that is, the ideals $I$ and $J$ are equidimensional. Further note that the heights of the ideals $I$, $J$, and $\mathfrak{a}$ are equal. Moreover, when the link is geometric, it follows that the ideal $\mathfrak{a}$ is the intersection of $I$ and $J$.

Foundational to the study of linkage is the following result of Peskine and Szpiro: 

\begin{prop}\label{propPS}\cite{PS}
    Let $R$ be a Gorenstein ring, $I$ and $J$ be ideals of $R$, and $\mathfrak{a}$ is an ideal generated by a regular sequence such that $I = \mathfrak{a}:J$ and $J = \mathfrak{a}:I$. Suppose that $R/I$ is a Cohen-Macaulay ring. Then 
    \begin{enumerate}
        \item $R/J$ is a Cohen-Macaulay ring.
        \item If $R$ is a local ring, $\omega_{R/I} \cong J/\mathfrak{a}$ and $\omega_{R/J} \cong I/\mathfrak{a}$ where $\omega_{R/I}$ (respectively $\omega_{R/J}$) denotes the canonical module of the ring $R/I$ (respectively $R/J$).
        \item If the ideals $I$ and $J$ are geometrically linked, then $\height(I+J) = \height(I)+1$, and the ring $R/(I+J)$ is Gorenstein. 
    \end{enumerate}
\end{prop}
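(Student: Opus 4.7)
The plan is to reduce modulo the regular sequence ideal $\mathfrak{a}$ and exploit Gorenstein duality in the quotient $\bar R := R/\mathfrak{a}$, which is again Gorenstein. Since parts (1) and (3) are local properties, I may localize to assume $R$ is local throughout, matching the explicit hypothesis of (2). A short preliminary step verifies $\height(I) = \height(J) = \height(\mathfrak{a}) =: c$: the containment $\mathfrak{a} \subseteq I$ together with the Cohen-Macaulay hypothesis on $R/I$ pins down the common value, and the symmetric argument via $J = \mathfrak{a} : I$ handles $J$. Writing $\bar I := I/\mathfrak{a}$ and $\bar J := J/\mathfrak{a}$, the linkage hypotheses descend to the mutual-annihilator statements $\bar I = (0 :_{\bar R} \bar J)$ and $\bar J = (0 :_{\bar R} \bar I)$ inside the Gorenstein local ring $\bar R$ of dimension $d := \dim R - c$.

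The central duality is the following statement for any Gorenstein local ring $\bar R$ of dimension $d$: a finitely generated $\bar R$-module $M$ of Krull dimension $d$ is maximal Cohen-Macaulay if and only if $\Ext^i_{\bar R}(M, \bar R) = 0$ for all $i > 0$, and in that case $\omega_M \cong \Hom_{\bar R}(M, \bar R)$, with the functor $\Hom_{\bar R}(-, \bar R)$ furnishing a duality on MCM modules. Applied to $M = R/I = \bar R/\bar I$, which is MCM over $\bar R$, this gives
\[
\omega_{R/I} \;\cong\; \Hom_{\bar R}(\bar R/\bar I, \bar R) \;=\; (0 :_{\bar R} \bar I) \;=\; \bar J \;=\; J/\mathfrak{a},
\]
which is half of (2). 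For (1), I would apply $\Hom_{\bar R}(-, \bar R)$ to the exact sequence $0 \to \bar J \to \bar R \to R/J \to 0$; since $\bar J \cong \omega_{R/I}$ is MCM, the long $\Ext$ sequence collapses, forcing $\Ext^i_{\bar R}(R/J, \bar R) = 0$ for all $i > 0$ and thus showing $R/J$ is MCM over $\bar R$, i.e., Cohen-Macaulay. Applying the duality once more yields $\omega_{R/J} \cong (0 :_{\bar R} \bar J) = I/\mathfrak{a}$, completing (1) and (2).

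For (3), the geometric hypothesis $\height(I+J) \geq c + 1$ combined with the equidimensionality of $I$ and $J$ (both of height $c$) forces $I \cap J = \mathfrak{a}$, yielding the Mayer-Vietoris sequence
\[
0 \to R/\mathfrak{a} \to R/I \oplus R/J \to R/(I+J) \to 0.
\]
Moreover $J \cdot (R/I) = (I+J)/I \cong J/(I \cap J) = J/\mathfrak{a}$, which by the previous paragraph is isomorphic to $\omega_{R/I}$. Therefore $R/(I+J) \cong (R/I)/\omega_{R/I}$, and the classical theorem that the quotient of a Cohen-Macaulay local ring by a canonical module realized as a (proper) ideal is Gorenstein of dimension one less immediately yields both the height equality $\height(I+J) = c+1$ and the Gorenstein property of $R/(I+J)$.

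The main obstacle is really the central Gorenstein duality identification; once that is in hand, (1) and (2) follow as essentially formal consequences of the long exact $\Ext$ sequence, and (3) reduces to the standard structural theorem about quotients by the canonical module embedded as an ideal. The only remaining overhead is the bookkeeping in the height-matching preamble and in tracking the connecting maps in the $\Ext$ sequence for (1), both of which are routine.
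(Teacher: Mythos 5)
The paper does not prove this proposition at all --- it is quoted from Peskine--Szpiro with a citation and used as a black box --- so there is no in-paper argument to compare against. Your proposal is essentially the standard textbook proof of the Peskine--Szpiro linkage theorem (reduce mod $\mathfrak{a}$, use that $\Hom_{\bar R}(-,\bar R)$ is a duality on maximal Cohen--Macaulay modules over the Gorenstein ring $\bar R=R/\mathfrak{a}$, identify $\Hom_{\bar R}(\bar R/\bar I,\bar R)=(0:_{\bar R}\bar I)=\bar J$, and for (3) pass to $(R/I)/\omega_{R/I}$ and invoke the classical fact that the quotient of a Cohen--Macaulay local ring by an ideal isomorphic to its canonical module is Gorenstein of dimension one less). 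The architecture is sound and all three parts do follow from it.

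Two points deserve repair. First, your justification for the height-matching preamble is not right as stated: $\mathfrak{a}\subseteq I$ together with $R/I$ Cohen--Macaulay does \emph{not} force $\height(I)=\height(\mathfrak{a})$ (take $\mathfrak{a}=(x)\subseteq I=(x,y)$ in $k[[x,y]]$). What actually pins the heights down is the colon relation: if $\height(I)>\height(\mathfrak{a})$ then $I$ is not contained in any minimal prime of $\mathfrak{a}$, so $J=\mathfrak{a}:I$ agrees with $\mathfrak{a}$ at every associated prime of $\mathfrak{a}$, whence $J$ lies in every primary component of the unmixed ideal $\mathfrak{a}$ and $I=\mathfrak{a}:J=R$, contradicting properness. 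Second, in part (1) the phrase ``the long $\Ext$ sequence collapses'' conceals the only nontrivial step: $\Ext^1_{\bar R}(\bar R/\bar J,\bar R)$ is the cokernel of the restriction map $\bar R=\Hom_{\bar R}(\bar R,\bar R)\to\Hom_{\bar R}(\bar J,\bar R)$, and its vanishing is exactly the surjectivity of the biduality map $\bar R/\bar I\to\Hom_{\bar R}(\Hom_{\bar R}(\bar R/\bar I,\bar R),\bar R)$, which holds because $\bar R/\bar I$ is maximal Cohen--Macaulay. You do announce the MCM duality in general terms, so this is covered in spirit, but it should be made explicit since it is where the Cohen--Macaulay hypothesis on $R/I$ is genuinely used.
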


The following theorem of Peskine and Szpiro says that one can always create a link of an equidimensional ideal in a Gorenstein ring; we present the proof for the convinience of the reader.

\begin{prop}\label{propUnmixed}\cite{PS}
    Suppose that $R$ is a Gorenstein ring and that $I$ is an equidimensional ideal of $R$ of height $g$. Let $\mathfrak{a}$ be an ideal of $R$ generated by a length $g$ regular sequence which is properly contained in the ideal $I$, and let $J = \mathfrak{a}:I$, then $I \sim _{\mathfrak{a}} J$. 
\end{prop}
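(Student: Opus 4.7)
The inclusion $I \subseteq \mathfrak{a}:J$ is immediate: since $J = \mathfrak{a}:I$, any $x \in I$ and $y \in J$ satisfy $xy \in \mathfrak{a}$. The substantive content is the reverse inclusion $\mathfrak{a}:(\mathfrak{a}:I) \subseteq I$. My plan is to pass to the quotient $\bar R = R/\mathfrak{a}$ and reinterpret the desired containment as $\ann_{\bar R}(\ann_{\bar R} \bar I) \subseteq \bar I$, where $\bar I = I/\mathfrak{a}$.

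Since $\mathfrak{a}$ is generated by a length-$g$ regular sequence in the Gorenstein ring $R$, the quotient $\bar R$ is Gorenstein of dimension $\dim R - g$, and $\bar I$ is an ideal of height $0$ in $\bar R$ (because $\height I = g = \height \mathfrak{a}$). The equidimensionality of $I$ — in the sense recalled after Definition~\ref{definitionLinkage}, namely that every associated prime of $R/I$ has height $g$ — passes to $\bar I$, so every associated prime of $\bar R/\bar I$ has height $0$ in $\bar R$. I would verify the containment $\ann_{\bar R}(\ann_{\bar R} \bar I) \subseteq \bar I$ by localization: both ideals contain $\bar I$, so equality may be checked after localizing at each $P \in \Ass(\bar R/\bar I)$. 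Each such $P$ is a height-zero (hence minimal) prime of $\bar R$, so $\bar R_P$ is an Artinian Gorenstein local ring, and formation of annihilators commutes with such a localization.

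In any Artinian Gorenstein local ring, Matlis duality implies that $\ann(-)$ is an involution on the set of ideals, so $\ann \ann K = K$ for every ideal $K$. Applying this to $\bar I_P$ closes the argument. The main subtlety, in my view, is the correct use of the unmixedness of $I$: without every associated prime of $\bar R/\bar I$ being of height zero in $\bar R$, embedded primes of positive height could appear in $\bar R/\bar I$, and the localization strategy would break down because at such embedded primes $\bar R_P$ would not be Artinian, and the biduality identity $\ann \ann = \mathrm{id}$ would no longer be available on the nose.
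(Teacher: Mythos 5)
Your proposal is correct and follows essentially the same route as the paper's proof: reduce modulo $\mathfrak{a}$ to the height-zero case, use equidimensionality to see that the associated primes of $\bar R/\bar I$ are minimal, localize there to an Artinian Gorenstein ring, and invoke the self-duality $\ann\ann K = K$ (the paper phrases this via $R/I \cong \Hom_R(\Hom_R(R/I,R),R)$, which is the same fact). No substantive difference.
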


\begin{proof}
    We only need to show $I = \mathfrak{a}:J$. We begin by observing that we may factor out $\mathfrak{a}$ to assume that $g=0$. This preserves all assumptions due to properties of colon ideals and the fact that $R/\mathfrak{a}$ remains Gorenstein. Thus, we must show that $I = 0:J$. Since $IJ = 0$, it is clear that $I \subseteq 0:J$. Thus, in order to show equality, it is enough to show the equality locally at every associated prime of $I$. Let $\mathfrak{p} \in \Ass(R/I)$. As $I$ is equidimensional of height $0$, it follows that $\mathfrak{p}$ is a minimal prime of $R$. Now localizing at $\mathfrak{p}$ and replacing $R$ by $R_{\mathfrak{p}}$, we may assume that $R$ is a local, Artinian, and Gorenstein ring. By dualizing properties of Gorenstein rings, we have the isomorphism \[R/I \cong \Hom_R(\Hom_R(R/I,R),R).\] Since $\Hom_R(R/I,R) = 0:I = J$, we have  
    \[I = \ann_R(R/I) = \ann_R(\Hom_R(J,R)) \supseteq \ann_R(J) = 0:J \qedhere \]
    
    \end{proof}

The specific kinds of links we study in this paper are called \emph{generic links}.

\begin{defn}
    Let $R$ be a Gorenstein ring and $I = (f_1,\dots,f_n)$ an equidimensional ideal of height $g$ in $R$. Let $X$ be an $g \times n$ matrix of indeterminates, and let $\mathfrak{a}$ be the ideal generated by the entries of $X[f_1\dots f_n]^T$. Then, we call the ideal $\mathfrak{a}:IR[X]$ a \emph{generic link} of $I$. 
\end{defn}

Note that a generic link of $I$ is in fact a link (of $I$) as in Definition \ref{definitionLinkage}. While the definition of generic link requires fixing a generating set of $I$, algebraic properties of generic links are independent of the generating set:

\begin{defn}
    Let $(R,I)$ and $(S,J)$ be pairs where $R$ and $S$ are Noetherian rings and $I\subseteq R$, $J \subseteq S$ are ideals. We say $(R,I)$ and $(S,J)$ are \emph{equivalent}, and write $(R,I) \equiv (S,J)$, if there exist finite sets of variables, $X$ over $R$ and $Z$ over $S$, and an isomorphism $\varphi:R[X] \rightarrow S[Z]$ such that $\varphi(IR[X]) = JR[Z]$. 
\end{defn}

\begin{lem}\cite[Proposition 2.4]{HU1}\label{lemLinksEquiv}
    Let $I$ be an equidimensional ideal in a Gorenstein ring $R$. If $J_1 \subseteq R[X_1]$ and $J_2 \subseteq R[X_2]$ are generic links of $I$, then $(R[X_1], J_1) \equiv (R[X_2], J_2)$. 
\end{lem}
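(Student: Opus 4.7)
The plan is to reduce the lemma to an elementary change of variables argument, via two basic moves on the chosen generating set of $I$: (a) reordering the generators, and (b) appending a single redundant generator. The relation $\equiv$ is clearly an equivalence relation (symmetric since inverses of isomorphisms are isomorphisms; transitive by concatenating polynomial extensions), so once both moves are shown to preserve equivalence, the proof concludes as follows. Given two generating sets $f_1,\ldots,f_n$ and $g_1,\ldots,g_m$ of $I$, iterating move (b) equates $(R[X_1], J_1)$ with the pair $(R[X_{12}], J_{12})$ arising from the concatenation $(f_1,\ldots,f_n,g_1,\ldots,g_m)$; a permutation (move (a)) followed by further iterations of (b) then equates this in turn with $(R[X_2], J_2)$.

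Move (a) is immediate: permuting the $f_i$'s permutes the columns of $X$, which is merely a relabeling of indeterminates. Move (b) is the real content of the argument. Write $I = (f_1,\ldots,f_n)$, pick $h = \sum_{i=1}^{n} c_i f_i \in I$, and let $X = (x_{i,j})$ be a $g \times n$ matrix of indeterminates, with $X' = (X \mid v)$ the $g \times (n+1)$ matrix obtained by appending a fresh column $v = (v_1,\ldots,v_g)^T$; in particular $R[X'] = R[X,v]$. Writing $f = (f_1,\ldots,f_n)^T$ and $c = (c_1,\ldots,c_n)^T$, the identity
\[
X' (f_1, \ldots, f_n, h)^T \;=\; Xf + vh \;=\; (X + vc^T) f
\]
is immediate. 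Consider the $R$-algebra automorphism $\varphi$ of $R[X,v]$ defined in matrix form by $X \mapsto X - vc^T$ and $v \mapsto v$; this is a triangular change in the $x_{i,j}$ variables, hence an isomorphism. Under $\varphi$, the ideal generated by the entries of $X'(f,h)^T$ is sent onto the ideal generated by the entries of $Xf$, while $IR[X,v]$ is fixed. Since colons commute with the flat polynomial extension $R[X] \hookrightarrow R[X,v]$ for the finitely generated ideal $I$, we conclude that $\varphi$ identifies the generic link arising from $(f_1,\ldots,f_n,h)$ with the extension to $R[X,v]$ of the generic link arising from $(f_1,\ldots,f_n)$, which is exactly the equivalence required by move (b).

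I expect the only technical subtlety to be the flat-base-change identity $(\mathfrak{a} : I)R[X,v] = \mathfrak{a} R[X,v] : IR[X,v]$ invoked at the end, but this is a standard fact requiring only flatness of the extension and finite generation of $I$; both are evident here. Everything else is bookkeeping with the equivalence relation, together with the observation that the basic moves (a) and (b) suffice to pass between any two generating sets of $I$.
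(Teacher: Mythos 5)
Your argument is correct. The paper does not prove this lemma itself --- it is quoted from Huneke--Ulrich \cite[Proposition 2.4]{HU1} --- and your reduction to the two moves (permuting the generators, and appending a redundant generator $h = c^T f$ via the identity $X'(f,h)^T = (X + vc^T)f$, the triangular automorphism $X \mapsto X - vc^T$, and flat base change for the colon ideal) is exactly the standard argument for this fact.
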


Due to the above lemma, we freely use the phrase ``the generic link'' of an ideal in this paper. The generic link of an equidimensional ideal is prime under mild conditions:

\begin{prop}\label{propLinksPrime}\cite[Proposition 2.6]{HU1}
    Let $R$ be a Cohen-Macaulay domain and $I$ an equidimensional ideal of $R$ such that $R/I$ is generically a complete intersection (that is, $(R/I)_\mathfrak{p}$ is a complete intersection for all $\mathfrak{p} \in \Ass(R/I)$). Then the generic link of $I$ is prime. 
\end{prop}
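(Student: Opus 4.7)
The plan is to control the generic link $J = \mathfrak{a} : IR[X]$, where $\mathfrak{a} = (\alpha_1, \ldots, \alpha_g)$ with $\alpha_i = \sum_{j=1}^n x_{ij} f_j$, by computing a primary decomposition of $\mathfrak{a}$ in $R[X]$ and then using the colon operation to isolate a single prime component.

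The first step is to check that $\alpha_1, \ldots, \alpha_g$ is a regular sequence in the Cohen-Macaulay ring $R[X]$, equivalently that $\height \mathfrak{a} = g$. For any prime $\mathfrak{p} \supseteq \mathfrak{a}$, either $I \subseteq \mathfrak{p}\cap R$, in which case $\height \mathfrak{p} \geq g$ by the hypothesis on $I$, or some $f_j$ is a unit in $R[X]_{\mathfrak{p}}$, allowing us to solve for $x_{1j}, \ldots, x_{gj}$ in the relations $\alpha_i = 0$ and conclude that $\mathfrak{a} R[X]_{\mathfrak{p}}$ has height $g$. In particular, $R[X]/\mathfrak{a}$ is Cohen-Macaulay, and the minimal primes of $\mathfrak{a}$ split into two classes: the extensions $P_i R[X]$ of the minimal primes $P_i$ of $I$, and the \emph{residual} primes $Q_1, \ldots, Q_m$ that do not contain $IR[X]$.

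The key use of the generically-complete-intersection hypothesis is to collapse the $P_iR[X]$-primary components of $\mathfrak{a}$ under the colon operation. Localizing at a minimal prime $P = P_i$ of $I$, we have $IR_P = (h_1, \ldots, h_g)$ for a regular sequence $h_1, \ldots, h_g$, and writing $f_j = \sum_k c_{jk} h_k$ produces a matrix $C = (c_{jk})$ over $R_P$ of rank $g$, since the $f_j$ span $IR_P/I^2 R_P$, which is free of rank $g$. Then $\det(XC)$ is a polynomial in the $x_{ij}$'s having at least one coefficient a unit in $R_P$, so it is a unit in the local ring $R[X]_{P_iR[X]}$, giving $\mathfrak{a} R[X]_{P_iR[X]} = IR[X]_{P_iR[X]}$. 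Consequently the $P_iR[X]$-primary component of $\mathfrak{a}$ agrees with that of $IR[X]$, contains $I$, and becomes the unit ideal upon colon-ing with $I$. Combined with the standard identity $\mathfrak{q} : I = \mathfrak{q}$ for any $Q$-primary ideal $\mathfrak{q}$ with $I \not\subseteq Q$, this yields $J = \bigcap_k \mathfrak{q}_k$ over the residual primary components.

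To finish, I would establish that the open subscheme $V(\mathfrak{a}) \setminus V(IR[X])$ of $\Spec R[X]$ is integral. On each distinguished open $V(\mathfrak{a}) \cap D(f_j R[X])$, inverting $f_j$ allows us to solve $\alpha_i = 0$ for $x_{1j}, \ldots, x_{gj}$, exhibiting an isomorphism with $\Spec R_{f_j}[X']$ for a polynomial ring over the domain $R_{f_j}$; each such piece is integral, and pairwise overlaps are nonempty because $R$ is a domain. Reducedness of $R[X]/\mathfrak{a}$ on this open set promotes each residual primary component $\mathfrak{q}_k$ to its associated prime $Q_k$, and the global irreducibility of the cover forces $m = 1$. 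Therefore $J = Q_1$ is prime. The main obstacle is the third paragraph: verifying that the rank condition on $C$ really does make $\det(XC)$ a unit in the local ring at $P_iR[X]$, so that the generically-complete-intersection hypothesis translates into the clean equality of localized ideals that kills the unwanted primary components.
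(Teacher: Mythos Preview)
The paper does not give a proof of this proposition; it is quoted from \cite[Proposition 2.6]{HU1} and used as a black box in the proof of Theorem~\ref{theoremGenericSFR}. So there is no ``paper's proof'' to compare your attempt against.

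That said, your outline is correct and is essentially the standard argument. On the point you flag as the main obstacle: since the $f_j$ generate $IR_{P}$, Nakayama gives that the reduction $\bar C$ has rank $g$ over the residue field $\kappa(P)$, so some $g\times g$ minor $\det(C_S)$ of $C$ is a unit in $R_P$. The Cauchy--Binet expansion $\det(XC)=\sum_{|S|=g}\det(X_S)\det(C_S)$ then has a unit coefficient in front of the monomials of $\det(X_S)$; because distinct subsets $S$ contribute monomials with distinct column-index sets, there is no cancellation, and hence $\det(XC)\notin P R[X]$. This makes $XC$ invertible over $R[X]_{PR[X]}$ and yields $\mathfrak{a}R[X]_{PR[X]}=IR[X]_{PR[X]}$ exactly as you need. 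One small point worth making explicit is that the residual locus $V(\mathfrak{a})\setminus V(IR[X])$ is nonempty (so $m\ge 1$ and $J$ is proper): your isomorphism $(R[X]/\mathfrak{a})_{f_j}\cong R_{f_j}[X']$ exhibits a domain of dimension $\dim R + g(n-1)=\dim R[X]/\mathfrak{a}$, whose generic point is therefore a minimal prime of $\mathfrak{a}$ not containing $I$.
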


In \S \ref{Sec:ResIntSingularity}, we discuss the singularities of generic residual intersections of ideals generated by regular sequences. Residual intersections generalize linkage, and are defined as follows:

\begin{defn}\label{defResInt}
    Let $R$ be a Cohen-Macaulay ring, and let $I$ be an ideal of $R$. Given an ideal $\mathfrak{a} \subsetneq I$ generated by $s$ elements, we say that $J = \mathfrak{a}:I$ is an \emph{$s$-residual intersection} of $I$ (or $R/J$ is an $s$-residual intersection of $R/I$) if $\height(J) \geq s$. 
\end{defn}

Note that by Proposition \ref{propUnmixed}, the above definition recovers linkage when $R$ is Gorenstein and $I$ is an equidimensional ideal of height $s$. 

As in the case of linkage, we define a generic residual intersection of an ideal as follows:

\begin{defn}
    Let $R$ be a Gorenstein ring and $I = (f_1,\dots,f_n)$ be an ideal of positive height. Given an integer $s \leq n$ such that $\mu(I_\mathfrak{p}) \leq \dim R_\mathfrak{p}$ (where $\mu(-)$ denotes the minimal number of generators) for all $\mathfrak{p} \in V(I)$ with $\dim(R_\mathfrak{p}) \leq s$, let $X$ be an $s \times n$ matrix of indeterminates and let $\mathfrak{a}$ be the ideal generated by the entries of $X[f_1\dots f_n]^T$. Then, we call the ideal $\mathfrak{a}:IR[X]$ a \emph{generic $s$-residual intersection} of $I$. 
\end{defn}

By \cite[Lemma 3.2]{HU4}, we have that the generic residual intersections of $I$ are residual intersections of $I$, as in Definition \ref{defResInt}. Furthermore, as in the case of links, we need not worry about the generating set that we select for $I$ and therefore freely use the phrase ``the generic residual intersection" of an ideal. 

\begin{lem}\cite[Lemma 2.2]{HU5}\label{lemResIntEquiv}
    Let $R$ be a Gorenstein ring and $I = (f_1,\dots,f_n)$ be an ideal of positive height. Given an integer $s \geq n$ such that $\mu(I_\mathfrak{p}) \leq \dim R_\mathfrak{p}$ for all $\mathfrak{p} \in V(I)$ with $\dim(R_p) \leq s$, if $J_1 \subseteq R[X_1]$ and $J_2 \subseteq R[X_2]$ are generic $s$-residual intersections of $I$, then $(R[X_1], J_1) \equiv (R[X_2], J_2)$. 
\end{lem}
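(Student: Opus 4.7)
The strategy mirrors the proof of Lemma~\ref{lemLinksEquiv} for generic links. First, I would reduce to the case where one generating set is obtained from the other by adjoining a single element: given two generating sets $(f_1,\dots,f_n)$ and $(g_1,\dots,g_m)$ of $I$, their union $(f_1,\dots,f_n,g_1,\dots,g_m)$ is a common refinement, and it suffices to compare the generic residual intersection built from any generating set with the one obtained by adjoining one more element. So suppose $(f_1,\dots,f_{n+1})$ is a generating set of $I$ with $f_{n+1}=\sum_{i=1}^n a_i f_i$ for some $a_i\in R$.

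Let $X=(x_{k,i})$ be a fresh $s\times n$ matrix of indeterminates and $X'=(x'_{k,i})$ a fresh $s\times (n+1)$ matrix, and let $\mathfrak{a}$, $\mathfrak{a}'$ be the ideals generated by the entries of $X[f_1,\dots,f_n]^T$ and $X'[f_1,\dots,f_{n+1}]^T$ in $R[X]$ and $R[X']$, respectively. The key step is to exhibit an explicit $R$-algebra isomorphism $R[X']\xrightarrow{\sim} R[X][y_1,\dots,y_s]$ carrying $\mathfrak{a}'$ to $\mathfrak{a}\cdot R[X][y_1,\dots,y_s]$. Concretely, I would use the substitution
\[x'_{k,i}\longmapsto x_{k,i}-a_i y_k \ (i\le n),\qquad x'_{k,n+1}\longmapsto y_k,\]
which is an isomorphism onto $R[X][y_1,\dots,y_s]$ because it is triangular with unit diagonal in the $x'_{k,i}$. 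Under this identification, the $k$-th entry of $X'[f_1,\dots,f_{n+1}]^T$ becomes
\[\sum_{i=1}^{n}(x_{k,i}-a_i y_k)f_i+y_k f_{n+1}=\sum_{i=1}^{n}x_{k,i}f_i,\]
so $\mathfrak{a}'$ maps precisely to $\mathfrak{a}\cdot R[X][y_1,\dots,y_s]$.

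To finish, I would use that the polynomial extension $R[X]\hookrightarrow R[X][y_1,\dots,y_s]$ is free (hence faithfully flat) and that $IR[X]$ is finitely generated, so colons commute with the extension:
\[\mathfrak{a}':IR[X']\ =\ \bigl(\mathfrak{a}R[X][y]\bigr):\bigl(IR[X][y]\bigr)\ =\ \bigl(\mathfrak{a}:IR[X]\bigr)R[X][y].\]
This is exactly an equivalence of pairs $(R[X'],\mathfrak{a}':IR[X'])\equiv(R[X],\mathfrak{a}:IR[X])$ in the sense of the definition preceding Lemma~\ref{lemLinksEquiv}. Iterating through the common refinement yields the conclusion. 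The only subtle point, which is not really an obstacle, is to verify that each of the colon ideals considered along the way is genuinely an $s$-residual intersection; this follows from the hypothesis $\mu(I_\mathfrak{p})\le\dim R_\mathfrak{p}$ (which depends only on $I$, not on a chosen generating set) together with \cite[Lemma 3.2]{HU4}, which guarantees that the required height inequality holds on both sides.
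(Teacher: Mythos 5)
The paper does not prove this lemma; it is quoted verbatim from Huneke--Ulrich \cite[Lemma 2.2]{HU5}, and your argument is exactly the standard one used there (and for the generic-link analogue \cite[Proposition 2.4]{HU1}): reduce to adjoining a single redundant generator, perform the triangular change of variables $x'_{k,i}\mapsto x_{k,i}-a_iy_k$, $x'_{k,n+1}\mapsto y_k$ that carries $\mathfrak a'$ to $\mathfrak a$ extended to a polynomial ring, and use flatness to commute the colon with the extension. Your proof is correct and complete as written; the only cosmetic remark is that passing between $F\cup G$ and $G\cup F$ in the common-refinement step implicitly uses a column permutation of the generic matrix, which is again an equivalence of pairs.
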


We close this subsection with the following elementary but helpful fact:

\begin{lem}\label{lemmaHeight}
    Let $R$ be an equidimensional and catenary ring (for example, a local Cohen-Macaulay ring) and let $b_1,\dots,b_i$ be part of a system of parameters of $R$. Given the natural ring map $\varphi : R \rightarrow R/(b_1,\dots,b_i)$, we have $\height(I) \geq \height(\varphi(I))$ for any ideal $I$ of $R$. 
\end{lem}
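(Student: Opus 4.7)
The plan is to reduce the statement to a short calculation of Krull dimensions. Since $R$ is equidimensional and catenary, one has $\height(\mathfrak{p}) + \dim(R/\mathfrak{p}) = \dim R$ for every prime $\mathfrak{p}$, and taking the minimum over minimal primes of any ideal $I$ yields $\height(I) = \dim R - \dim(R/I)$. Writing $\bar R = R/(b_1,\dots,b_i)$, the desired inequality $\height_R(I) \geq \height_{\bar R}(\varphi(I))$ will follow once I assemble the right comparison among $\dim R$, $\dim \bar R$, $\dim(R/I)$, and $\dim(\bar R/\varphi(I))$.

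I would assemble three ingredients. First, since $b_1,\dots,b_i$ is part of a system of parameters, $\dim \bar R = \dim R - i$. Second, adding these $i$ elements to $I$ drops the dimension of the quotient by at most $i$, giving $\dim(\bar R/\varphi(I)) = \dim R/(I+(b_1,\dots,b_i)) \geq \dim(R/I) - i$; this follows by induction on $i$, each step using Krull's principal ideal theorem applied in $R/Q$ for a minimal prime $Q$ of the current ideal with $\height(Q)$ equal to the height of that ideal, together with the equidimensional-catenary identity above. Third, for any ideal $J$ in any Noetherian ring $S$ one has $\height(J) + \dim(S/J) \leq \dim S$: pick a minimal prime $P_0$ of $J$ realizing $\dim(S/J) = \dim(S/P_0)$, use $\height(J) \leq \height(P_0)$, and concatenate a saturated chain beneath $P_0$ with one above to bound $\height(P_0) + \dim(S/P_0) \leq \dim S$. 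Applied to $\bar R$ and $\varphi(I)$ this yields $\height_{\bar R}(\varphi(I)) \leq \dim \bar R - \dim(\bar R/\varphi(I))$.

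Chaining the three estimates gives
\[
\height_{\bar R}(\varphi(I)) \;\leq\; \dim \bar R - \dim(\bar R/\varphi(I)) \;\leq\; (\dim R - i) - (\dim(R/I) - i) \;=\; \height_R(I),
\]
which is the desired inequality.

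I do not anticipate a genuine obstacle; the calculation is bookkeeping. The one point that requires care is that $\bar R$ itself need not be equidimensional, so on that side of the computation only the inequality $\height + \dim(-/J) \leq \dim$ is available — not the equality used on the $R$ side. This asymmetry is what forces the argument to proceed by bounding dimensions first and converting back to heights only at the very end.
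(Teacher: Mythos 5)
Your proof is correct and follows essentially the same dimension-counting route as the paper's: both hinge on the identity $\height(J)=\dim R-\dim(R/J)$ on the $R$-side together with the fact that adjoining one element raises height (equivalently, drops the dimension of the quotient) by at most one, applied $i$ times. The only cosmetic difference is that you phrase the intermediate bound in terms of $\dim(R/(I+(b_1,\dots,b_i)))$ rather than $\height(I+(b_1,\dots,b_i))$, and you prudently use only the inequality $\height(\varphi(I))\leq\dim\bigl(R/(b_1,\dots,b_i)\bigr)-\dim\bigl(R/(I+(b_1,\dots,b_i))\bigr)$ on the quotient side, where the paper invokes the corresponding equality.
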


\begin{proof}
    We first show that $\height(I+(b_1,\dots,b_i)) \leq \height(I)+i$. 
    Without loss of generality, we may assume that $i=1$. If $b_1$ is contained in a minimal prime of $I$, we are done. So, assume that $b_1$ is not contained in any minimal prime of $I$. Let $\mathfrak{p}$ be a minimal prime of $I$. Since $R$ is catenary and $b_1$ is not in $\mathfrak{p}$, $\height(\mathfrak{p}+(b_1)) =\height(\mathfrak{p})+1$. As $R$ is equidimensional, this gives us that $\height(I+(b_1)) \leq \height(I)+1$.
    
    Let $\mathfrak{b} = (b_1,\dots,b_i)$. As $R$ is catenary and $b_1,\dots,b_i$ are part of a system of parameters, we have \begin{eqnarray*}
        \height(I) &\geq& \height(I+\mathfrak{b})-i\\
        &=& \dim(R) - \dim\big(R/(I+\mathfrak{b})\big)-i\\
        &=& \dim(R) - \big(\dim(R/\mathfrak{b}) - \height\big((I+\mathfrak{b})/\mathfrak{b}\big)\big)-i \\
        &=& \height\big((I+\mathfrak{b})/\mathfrak{b}\big) + i-i\\
        &=& \height(\varphi(I)), 
    \end{eqnarray*} 
    as claimed.
\end{proof}

\subsection{Singularities in positive characteristic.} 

Let $R$ be a Noetherian ring of prime characteristic $p > 0$. The letter $e$ denotes a variable nonnegative integer, and $q$ is the $e$-th power of $p$, i.e., $q = p^e$. For an ideal $I =(x_1, \ldots, x_n)$ of $R$, let $I^{[q]} =(x_1^q, \ldots, x_n^q)$.

For a reduced ring $R$ of characteristic $p > 0$, $R^{1/q}$ shall denote the ring obtained by adjoining all $q$th roots of elements of $R$. A ring $R$ is said to be \emph{$F$-finite} if $R^{1/p}$ is module-finite over $R$. A finitely generated algebra $R$ over a field $K$ is $F$-finite if and only if $K^{1/p}$ is a finite field extension of $K$ --- a fairly mild condition.

\begin{defn}
Let $R$ be a ring of characteristic $p>0$, $R^{\circ}$ the complement of the union of its minimal primes, and $I$ an ideal of $R$. For an element $x$ of $R$, we say that $x \in I^F$, the Frobenius closure of $I$, if there exists $q = p^e$ such that $x^q \in I^{[q]}$.

An element $x$ of $R$ is in $I^*$, the tight closure of $I$, if there exists $c\in R^{\circ}$ such that $cx^q \in I^{[q]}$ for all $q = p^e \gg 0$. If $I = I^*$, the ideal $I$ is tightly closed. Note that $I \subseteq I^F \subseteq I^*$.

A ring $R$ is \emph{$F$-pure} if the Frobenius map is pure. That is, $F: M \longrightarrow F(M)$ is injective for all $R$-modules $M$. Note that this implies $I^F = I$ for all ideals $I$ of $R$. 

A graded or local ring $(R,\mathfrak{m})$ is \emph{$F$-injective} if the natural Frobenius action on the local cohomology module $F: H^i_{\mathfrak{m}}(R) \longrightarrow H^i_{\mathfrak{m}}(R)$ is injective for each integer $i$.

A ring $R$ is weakly $F$-regular if every ideal of $R$ is tightly closed, and is \emph{$F$-regular} if every localization is weakly $F$-regular. An $F$-finite ring $R$ is strongly $F$-regular if for every element $c \in R^{\circ}$, there exists an integer $q = p^e$ such that the $R$-linear inclusion $R \longrightarrow R^{1/q}$ sending $1$ to $c^{1/q}$ splits as a map of $R$-modules. $R$ is $F$-rational if, in every local ring of $R$, all ideals generated by systems of parameters are tightly closed.
\end{defn}

It is clear that a graded (or local) $F$-pure ring is $F$-injective. We summarize some basic results regarding these notions from \cite[Theorem 2.1]{K} \cite[Theorem 3.1]{HH1}, \cite[Theorem 4.3]{HH3}, \cite[Lemma 3.3]{F}, and \cite[Corollaries 4.3 \text{and} 4.4]{LS} which we use. 

\begin{thm} \label{TheoremF-singularities}\ 
    \begin{enumerate} 
        \item A ring $R$ is regular if and only if the Frobenius map on $R$ is flat.
    
        \item Regular rings are $F$-regular; if they are $F$-finite, they are also strongly $F$-regular.

        \item An $F$-rational Gorenstein ring is $F$-regular. If it is $F$-finite, then it is also strongly $F$-regular.
        
        \item An $F$-injective Gorenstein ring is $F$-pure.
        
        \item The notions of weak $F$-regularity and $F$-regularity agree for $\mathbb{N}$-graded rings. For $F$-finite $\mathbb{N}$-graded rings, these are also equivalent to strong $F$-regularity.
    \end{enumerate}
\end{thm}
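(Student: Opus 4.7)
The statement is a curated summary of classical results in prime characteristic commutative algebra, so my plan is to trace through each item by invoking the cited references, rather than developing new ideas. Each clause is self-contained, and I would handle them in the order listed.

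For (1) I would reproduce Kunz's original argument: reducing to the complete local case by Cohen's structure theorem, a $p$-basis of the residue field combined with a regular system of parameters exhibits $R^{1/p}$ as a free $R$-module, yielding flatness of Frobenius. The converse requires showing that flatness of Frobenius forces finite projective dimension of $R/\mathfrak{m}$; this comes from comparing $\mathfrak{m}^q$ with $\mathfrak{m}^{[q]}$ and applying flatness iteratively. For (2), once (1) is available, flatness of Frobenius on a regular ring trivializes tight closure: if $c x^q \in I^{[q]}$ with $c$ a non-zerodivisor, then by flatness $x^q \in I^{[q]}$ forces $x \in I$. In the $F$-finite case, $R^{1/q}$ is locally free over $R$, and any $R$-basis for $R^{1/q}$ whose first element is $c^{1/q}$ gives the required splitting.

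For (3) the key reduction is that in a Gorenstein local ring every ideal $I$ equals $(J :_R (J :_R I))$ for some parameter ideal $J \subseteq I$, by linkage. Combined with the colon-capturing property of tight closure and the $F$-rationality of the parameter ideal $J$, this shows that $I$ is tightly closed, giving weak $F$-regularity; since the hypotheses localize, $F$-regularity follows, and in the $F$-finite case the upgrade to strong $F$-regularity is \cite[Theorem 3.1]{HH1}. For (4), I would use local duality: in a Gorenstein local ring of dimension $d$ we have $H^d_{\mathfrak{m}}(R) \cong E_R(R/\mathfrak{m})$, and the natural Frobenius on the former is Matlis-dual to the Frobenius on $R$, so injectivity of the first is equivalent to purity of the second.

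For (5), the $\mathbb{N}$-graded setting simplifies matters because the unique homogeneous maximal ideal detects tight closure of homogeneous ideals; the equivalence of weak and strong $F$-regularity in the $F$-finite graded case then follows from the analysis of graded test elements and splittings of Frobenius iterates carried out by Lyubeznik and Smith. The main obstacle in assembling this theorem is not any single item but the breadth of machinery invoked---Kunz's characterization, local duality, colon-capturing, and the splitting criterion---but since each ingredient is standard and fully developed in the cited references, the ``proof'' here is really organizational, and I would simply attribute each clause to its source.
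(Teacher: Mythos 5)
The paper offers no proof of this theorem at all---it is stated purely as a summary of classical results with citations to Kunz, Hochster--Huneke, Fedder, and Lyubeznik--Smith---and your proposal takes essentially the same approach, correctly attributing each clause to its source and sketching the standard arguments. Your sketches are accurate; the only slightly loose point is in (2), where the splitting $R \to R^{1/q}$, $1 \mapsto c^{1/q}$, exists because $c \notin \mathfrak{m}^{[q]}$ for $q \gg 0$ (by the Krull intersection theorem, since $\mathfrak{m}^{[q]} \subseteq \mathfrak{m}^{q}$) so that $c^{1/q}$ generates a free direct summand of the free module $R^{1/q}$, rather than by ``choosing a basis whose first element is $c^{1/q}$.''
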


The following criterion of Fedder is useful in determining the $F$-purity of finitely generated $K$-algebras:

\begin{thm}\label{theoremFedder}\cite[Theorem 1.12]{F}
Let $R = K[x_1, \ldots, x_n]$ be an $\mathbb{N}$-graded polynomial ring over a field $K$ of positive characteristic, $I$ is a homogeneous ideal of $R$ and $\mathfrak{m} = (x_1,\ldots, x_n)$. Then, $R/I$ is $F$-pure if and only if for some positive integer $e$, $(I^{[p^e]}:I) \nsubseteq \mathfrak{m}^{[p^e]}$.
\end{thm}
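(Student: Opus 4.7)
The plan is to characterize $F$-purity of $R/I$ via the existence of a splitting of the $e$-th Frobenius, then to classify such splittings using the self-duality of $F^e_*R$ over $R$ coming from regularity. First, I observe that $R/I$ is $F$-pure iff the iterated Frobenius $R/I \to F^e_*(R/I)$ is pure for every $e \geq 1$ (purity is preserved and reflected under composition, so these are equivalent conditions), hence it suffices to show, for each fixed $e$, that $R/I$ is $F^e$-pure iff $(I^{[p^e]}:I) \not\subseteq \mathfrak{m}^{[p^e]}$; the statement of the theorem then follows by varying $e$.

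The key ingredient is the identification $\Hom_R(F^e_*R, R) \cong F^e_*R$ as $F^e_*R$-modules, a consequence of the regularity of $R$ (applied after localizing at $\mathfrak{m}$, where $R_\mathfrak{m}$ is $F$-finite; the case of non-perfect $K$ is handled analogously or via Fedder's original Matlis-dual argument). Fix a generator $T$: when $K$ is perfect, $F^e_*R$ is free over $R$ with basis $\{F^e_*(x^a) : 0 \leq a_i < p^e\}$, and $T$ is the projection onto the dual of $F^e_*(x_1^{p^e-1}\cdots x_n^{p^e-1})$. Every $R$-linear map $F^e_*R \to R$ has the form $\phi_u(F^e_*(r)) = T(F^e_*(ur))$ for a unique $u \in R$. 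A direct computation, using the fact that for $v \in R$ the condition $v \in I^{[p^e]}$ is equivalent to $T(F^e_*(rv)) \in I$ for all $r \in R$, shows that $\phi_u$ descends to $\overline{\phi_u}: F^e_*(R/I) \to R/I$ precisely when $u \in (I^{[p^e]}:I)$.

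Now $R/I$ is $F^e$-pure (equivalently, $F^e$-split in this graded/local setting) iff some $\overline{\phi_u}$ gives a splitting of the $e$-th Frobenius, i.e., $T(F^e_*(u))$ is a unit in $(R/I)_\mathfrak{m}$, which amounts to $T(F^e_*(u)) \notin \mathfrak{m}$. The main combinatorial step, and the main obstacle, is to translate this into the ideal-membership statement of the theorem. Expanding $u = \sum_{0 \leq a_i < p^e} b_a^{p^e} x^a$, one has $u \in \mathfrak{m}^{[p^e]}$ iff every $b_a \in \mathfrak{m}$, while $T(F^e_*(u)) = b_{(p^e-1,\ldots,p^e-1)}$. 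Given $u \in (I^{[p^e]}:I)$ with $u \notin \mathfrak{m}^{[p^e]}$, one picks an index $a$ with $b_a \notin \mathfrak{m}$ and replaces $u$ by $u' := x^{(p^e-1)-a} u$, which remains in the ideal and satisfies $T(F^e_*(u')) = b_a \notin \mathfrak{m}$; the converse direction is immediate since $T(F^e_*(u)) \notin \mathfrak{m}$ forces $b_{(p^e-1,\ldots,p^e-1)} \notin \mathfrak{m}$, whence $u \notin \mathfrak{m}^{[p^e]}$. Combining these gives Fedder's criterion.
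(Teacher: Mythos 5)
This statement is quoted from Fedder's paper and is not proved in the present paper, so there is no internal proof to compare against; I can only assess your argument on its own terms. What you have written is the standard modern proof of Fedder's criterion via the identification $\Hom_R(F^e_*R,R)\cong F^e_*R$ and the trace generator $T$, rather than Fedder's original argument (which runs through Matlis duality over the completion), and it is essentially correct: the classification of maps $\phi_u$, the descent condition $u\in(I^{[p^e]}:I)$, and the translation between ``$T(F^e_*(u))\notin\mathfrak{m}$ for some admissible $u$'' and ``$(I^{[p^e]}:I)\nsubseteq\mathfrak{m}^{[p^e]}$'' via multiplication by $x^{(p^e-1,\dots,p^e-1)-a}$ are all sound, as is the reduction from $F$-purity to $F^e$-purity for a single $e$. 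Three points deserve to be made explicit rather than gestured at: (i) the equivalence of purity and splitting for $R/I\to F^e_*(R/I)$, which holds here because the target is a finitely generated module over the $F$-finite (hence excellent) ring $R/I$; (ii) the reduction from the graded ring to its localization at $\mathfrak{m}$ --- your splitting criterion ``$T(F^e_*(u))$ is a unit in $(R/I)_\mathfrak{m}$'' proves purity of $(R/I)_\mathfrak{m}$, and one must invoke the standard fact that $F$-purity of an $\mathbb{N}$-graded ring with $[R/I]_0=K$ is detected at the homogeneous maximal ideal (or argue directly with homogeneous splittings); and (iii) the non-perfect case, where under the paper's standing $F$-finiteness convention $F^e_*R$ is still free over $R$ (a basis is given by monomials times a $p^e$-basis of $K$), so your argument goes through verbatim with $T$ the corresponding dual-basis projection --- this is worth saying since the theorem as stated allows arbitrary $K$ of positive characteristic. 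With these clarifications the proof is complete.
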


As an immediate application of the above theorem, one can establish precisely when complete intersection rings are $F$-pure.

\begin{prop}\label{propFedderCI} \cite[Proposition 2.1]{F}
    Let $(R, \mathfrak{m})$ be a graded (or local) regular ring of prime characteristic $p$ and $\mathfrak{a}$ be an ideal of $R$ generated by a regular sequence $f_1, \ldots, f_g$, and $f = \prod _{i=1}^g f_i$. The following are equivalent:
    \begin{enumerate}
        \item $R/\mathfrak{a}$ is an $F$-pure ring.
        \item $R/f$ is an $F$-pure ring.
        \item $f^{p-1} \notin \mathfrak{m}^{[p]}$.
    \end{enumerate}
\end{prop}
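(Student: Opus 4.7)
The plan is to apply Fedder's criterion (Theorem~\ref{theoremFedder}) to both $R/\mathfrak{a}$ and $R/(f)$ and show that the resulting conditions collapse to the single statement: $f^{p^e-1} \notin \mathfrak{m}^{[p^e]}$ for some $e \geq 1$. A final, elementary Frobenius manipulation will then reduce the existential quantifier on $e$ to the case $e=1$, recovering (3).

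The key input is the classical colon-ideal identity for a regular sequence: for every integer $n \geq 1$,
$$(f_1^n, \ldots, f_g^n) : (f_1, \ldots, f_g) \;=\; (f_1^n, \ldots, f_g^n) + \bigl((f_1\cdots f_g)^{n-1}\bigr).$$
The containment $\supseteq$ is immediate since $(f_1\cdots f_g)^{n-1} f_i = f_i^n \prod_{j \neq i} f_j^{n-1}$; the opposite containment I would establish by induction on $g$, exploiting the fact that $f_g$ is a nonzerodivisor modulo $(f_1^n,\ldots,f_{g-1}^n)$ (alternatively, it is a direct consequence of the Koszul complex of a regular sequence being acyclic). Specializing to $n = p^e$ gives $\mathfrak{a}^{[p^e]}:\mathfrak{a} = \mathfrak{a}^{[p^e]} + (f^{p^e-1})$. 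Because each $f_i \in \mathfrak{m}$, we have $\mathfrak{a}^{[p^e]} \subseteq \mathfrak{m}^{[p^e]}$, so the Fedder condition $\mathfrak{a}^{[p^e]}:\mathfrak{a} \not\subseteq \mathfrak{m}^{[p^e]}$ is equivalent to $f^{p^e-1} \notin \mathfrak{m}^{[p^e]}$. The principal case is trivial: $(f^{p^e}):(f) = (f^{p^e-1})$, and the Fedder criterion for $R/(f)$ yields the same condition. This already gives $(1) \Leftrightarrow (2)$, with both equivalent to the existence of some $e \geq 1$ with $f^{p^e-1} \notin \mathfrak{m}^{[p^e]}$.

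It remains to show this existential statement is equivalent to its $e=1$ instance (3). The direction $(3) \Rightarrow$ existential is immediate. For the converse I would argue contrapositively: assume $f^{p-1} \in \mathfrak{m}^{[p]}$ and choose a regular system of parameters $x_1, \ldots, x_n$ for $\mathfrak{m}$, so $f^{p-1} = \sum_j r_j x_j^p$. Raising to the $p^{e-1}$-st power and using additivity of Frobenius in characteristic $p$ yields
$$f^{(p-1)p^{e-1}} = \sum_j r_j^{p^{e-1}} x_j^{p^e} \in \mathfrak{m}^{[p^e]},$$
and multiplying by $f^{p^{e-1}-1}$ gives $f^{p^e-1} \in \mathfrak{m}^{[p^e]}$, since $(p-1)p^{e-1} + (p^{e-1}-1) = p^e-1$. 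Thus all three conditions are equivalent. The only substantive obstacle is the regular-sequence colon identity; every other step is a routine manipulation of Frobenius powers together with a direct application of Theorem~\ref{theoremFedder}.
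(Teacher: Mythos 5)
Your proposal is correct and follows essentially the same route as the paper, whose entire proof is the observation that $\mathfrak{a}^{[p]}:\mathfrak{a} = \mathfrak{a}^{[p]} + (f^{p-1})$ followed by an appeal to Fedder's criterion. You additionally supply the reduction from ``some $e$'' to $e=1$ (needed because the paper states Fedder's criterion with an existential quantifier over $e$), which is a worthwhile detail the paper leaves implicit.
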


\begin{proof}
    Note that $\mathfrak{a}^{[p]}:\mathfrak{a} = f^{p-1} + \mathfrak{a}^{[p]}$. The proof is immediate from Theorem \ref{theoremFedder}.
\end{proof}

The following criterion of Glassbrenner is useful in determining the $F$-regularity of finitely generated $K$-algebras: 

\begin{thm}\label{theoremGlassbrenner} \cite[Theorem 3.1]{G}
    Let $R$ be an $\mathbb{N}$-graded domain of prime characteristic $p$ such that $[R]_0$ is an infinite $F$-finite field and $R$ is a finitely generated $[R]_0$ algebra. Write \[R \cong K[x_1 \ldots , x_n]/I\]
    where $K =[R]_0$, $K[x_1, \ldots, x_n]$ is an $\mathbb{N}$-graded polynomial ring over $K$ and $x_1, \ldots x_n$ are homogeneous, and $I$ is a homogeneous ideal.
    
    Let $s$ be a nonzero, homogeneous element of $K[x_1 \ldots, x_n]$ not in $I$ for which $R_s$ is regular or even just strongly F-regular. Let $\mathfrak{m}$ be the homogeneous maximal ideal of $K[x_1 \ldots, x_n]$. The following are equivalent:
    \begin{enumerate}
        \item The ring $R$ is strongly $F$-regular.
        \item There exists a positive integer $e$ such that $s(I^{[p^e]}:I) \nsubseteq \mathfrak{m}^{[p^e]}$.
    \end{enumerate}
\end{thm}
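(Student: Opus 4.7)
The plan is to translate the splitting condition for strong $F$-regularity through a Fedder-style correspondence into the ideal-theoretic condition $s(I^{[q]}:I) \not\subseteq \mathfrak{m}^{[q]}$, and to use the Hochster--Huneke principle that strong $F$-regularity of an $F$-finite ring can be detected by a single splitting along an element $s$ provided $R_s$ is already strongly $F$-regular.

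Set $S = K[x_1, \ldots, x_n]$ and $q = p^e$. Because $S$ is a regular $F$-finite ring, $S^{1/q}$ is a free $S$-module, and $\Hom_S(S^{1/q}, S)$ is free of rank one over $S^{1/q}$. Fix a generator $\Phi$ (concretely, $\Phi$ picks out the coefficient of $(x_1^{q-1} \cdots x_n^{q-1})^{1/q}$ in the standard monomial $S$-basis of $S^{1/q}$). Every $S$-linear map $S^{1/q} \to S$ has the form $\psi_g(y) = \Phi(g^{1/q} y)$ for a unique $g \in S$, and the standard Fedder lemma shows that $\psi_g$ descends to an $S/I$-linear map $(S/I)^{1/q} \to S/I$ precisely when $g \in (I^{[q]}:I)$. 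A splitting of $R \hookrightarrow R^{1/q}$ sending $1 \mapsto s^{1/q}$ is the same as an $R$-linear map $\psi : R^{1/q} \to R$ with $\psi(s^{1/q})$ a unit in $R$. In the $\mathbb{N}$-graded domain setting, this unit lies in $[R]_0 = K$, and unwinding $\Phi$ through the monomial basis shows that the splitting exists if and only if there is a $g \in (I^{[q]}:I)$ with $sg \notin \mathfrak{m}^{[q]}$: a witnessing monomial $x^\gamma$ of $sg$ with $\gamma_i < q$ and nonzero scalar coefficient can be shifted by multiplying $g$ by the monomial $x^{(q-1, \ldots, q-1)-\gamma}$ (which keeps $g$ in the colon ideal), so that $\psi_{g'}(s^{1/q})$ becomes a nonzero scalar, and then rescaled to $1$ by a scalar in $K^\times$.

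For $(1) \Rightarrow (2)$: since $R$ is a domain and $s \notin I$, we have $s \in R^\circ$; applying the definition of strong $F$-regularity to $c = s$ supplies a splitting, which the correspondence converts to the colon condition. For $(2) \Rightarrow (1)$: the correspondence yields a single splitting $\phi_0 : R^{1/q} \to R$ with $\phi_0(s^{1/q}) = 1$. Given an arbitrary $c \in R^\circ$, strong $F$-regularity of $R_s$ provides an $R_s$-linear splitting along $c^{1/q'}$ for some $q' = p^{e'}$. Clearing denominators yields an $R$-linear map $R^{1/q'} \to R$ that sends $c^{1/q'}$ to a power of $s$; taking an appropriate Frobenius iterate of $\phi_0$ and composing absorbs this power of $s$ and produces a splitting of $R \hookrightarrow R^{1/q''}$ along $c^{1/q''}$ for some $q'' \geq qq'$, establishing strong $F$-regularity of $R$.

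The main technical obstacle is the Fedder-style translation, namely the monomial-shift and rescaling argument identifying ``$\psi_g$ factors the inclusion along $s^{1/q}$'' with the clean condition $sg \notin \mathfrak{m}^{[q]}$; the care required here is why the hypothesis that $R$ is $\mathbb{N}$-graded with $[R]_0$ an infinite $F$-finite field enters (homogeneity and the availability of enough scalars for rescaling). Once that translation is in place, the upgrade to strong $F$-regularity in $(2) \Rightarrow (1)$ is a standard application of the Hochster--Huneke trick of composing splittings across Frobenius iterates.
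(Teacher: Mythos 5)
The paper does not prove this statement: it is quoted verbatim from Glassbrenner \cite[Theorem 3.1]{G} and used as a black box, so there is no internal proof to compare against. Your argument is, in substance, the standard proof of Glassbrenner's criterion --- Fedder's $p^{-e}$-linear correspondence identifying maps $R^{1/q}\to R$ with elements of $(I^{[q]}:I)$ modulo $I^{[q]}$, combined with the Hochster--Huneke principle that a single splitting along $s^{1/q}$ together with strong $F$-regularity of $R_s$ forces strong $F$-regularity of $R$ --- and the outline is correct.

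One step deserves more care than you give it. After the monomial shift you assert that $\psi_{g'}(s^{1/q})$ ``becomes a nonzero scalar.'' In general $\Phi\bigl((sg')^{1/q}\bigr)$ is a nonzero scalar \emph{plus terms of positive degree}, and in an $\mathbb{N}$-graded domain such an element is not a unit (unlike in the local case), so this does not yet yield a splitting. The fix is to first replace $g$ by one of its homogeneous components: since $s$, $I$, and hence $(I^{[q]}:I)$ and $\mathfrak{m}^{[q]}$ are all homogeneous, $sg\notin\mathfrak{m}^{[q]}$ forces $sg_d\notin\mathfrak{m}^{[q]}$ for some homogeneous component $g_d\in(I^{[q]}:I)$; with $g$ homogeneous, $\Phi\bigl((sg')^{1/q}\bigr)$ is homogeneous with nonzero degree-zero part and is therefore genuinely a scalar. (A second, minor, point: when $K$ is not perfect the ``coefficient'' extracted by $\Phi$ lives in $K^{1/q}$, so one must also compose with a generator of $\Hom_K(K^{1/q},K)$ and adjust $g$ by a $q$-th power from $K^{1/q}$ to make the value nonzero in $K$; this is routine. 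Your appeal to the infinitude of $K$ for ``rescaling'' is not actually where that hypothesis is used --- rescaling only needs $K$ to be a field.) With these repairs the proof is complete.
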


\subsection*{Convention and notation:}
Throughout the paper, we assume that all rings of positive prime characteristic under consideration are $F$-finite. All graded rings are assumed to have a unique maximal ideal. Since the rings we work with are $\mathbb{N}$-graded, we make no distinction between the various notions of $F$-regularity in view of Theorem \ref{TheoremF-singularities} ($5$). 

Let $X= (x_{i,j})$ be a $t\times n$ matrix of indeterminates (unless stated otherwise) over a field $K$ with $t \leq n$. Let $I_t(X)$ denote the ideal generated by the $t \times t$ minors of $X$; the quotient ring $K[X]/I_t(X)$ is called the generic determinantal ring defined by minors of size $t$. It is well-known that the ideal of maximal minors $I_t(X)$ has height $n-t+1$ in $K[X]$. We use $[i,t+i-1]$ to denote the size $t$ minor of $X$ with columns $i$ through $t+i-1$. The symbol $[(a_{i,j})]^T$ is used to denote the transpose of the matrix $[(a_{i,j})]$.

\section{Fedder's Criterion and Linkage}\label{Sec:MainLemma}

\begin{lem}\label{lemmaMain}
    Let $(R,\mathfrak{m})$ be a regular graded (or local) ring of prime characteristic $p$. Let $\mathfrak{a}$ and $I$ be proper ideals of $R$. If $J = \mathfrak{a}:I$ is a proper ideal of $R$, then the ideal $\mathfrak{a}^{[p]}:\mathfrak{a}$ is contained in $J^{[p]}:J$. In particular, if $R/\mathfrak{a}$ is $F$-pure, then so is $R/J$.
\end{lem}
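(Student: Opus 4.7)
The plan is a short colon-ideal chase whose only substantive ingredient is flatness of Frobenius on the regular ring $R$ (Theorem \ref{TheoremF-singularities}(1)). Flatness of Frobenius yields the well-known identity $(\mathfrak{b}:K)^{[p]} = \mathfrak{b}^{[p]}:K^{[p]}$ for any ideal $\mathfrak{b}$ of $R$ and any finitely generated ideal $K$; this identity is what will convert a colon inclusion about $\mathfrak{a}$ into one about $J$.

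Concretely, I will pick an arbitrary $u \in \mathfrak{a}^{[p]}:\mathfrak{a}$ and an arbitrary $y \in J = \mathfrak{a}:I$, and show $uy \in J^{[p]}$. The hypothesis $y \in \mathfrak{a}:I$ gives $yI \subseteq \mathfrak{a}$, and multiplying by $u$ yields $uyI \subseteq u\mathfrak{a} \subseteq \mathfrak{a}^{[p]}$, so $uy \in \mathfrak{a}^{[p]}:I$. Since $I^{[p]} \subseteq I$, this sits inside $\mathfrak{a}^{[p]}:I^{[p]}$, which by flatness of Frobenius equals $(\mathfrak{a}:I)^{[p]} = J^{[p]}$. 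Hence $u \in J^{[p]}:J$, which establishes the asserted inclusion.

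For the ``in particular'' statement, I would invoke Fedder's criterion (Theorem \ref{theoremFedder}) at both ends. If $R/\mathfrak{a}$ is $F$-pure, then $\mathfrak{a}^{[p]}:\mathfrak{a} \not\subseteq \mathfrak{m}^{[p]}$. The main inclusion then forces $J^{[p]}:J \not\subseteq \mathfrak{m}^{[p]}$, and a second application of Fedder's criterion delivers the $F$-purity of $R/J$.

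There is no real obstacle in this argument; it amounts to a one-line manipulation once the correct framing is adopted. The only subtle point to keep in mind is that one must pass from $\mathfrak{a}^{[p]}:I$ up to $\mathfrak{a}^{[p]}:I^{[p]}$ before invoking flatness of Frobenius — otherwise Kunz's theorem cannot be applied to rewrite the colon as a bracket power of $J$.
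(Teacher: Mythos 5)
Your argument is correct and is essentially identical to the paper's proof: both reduce to the containment $uJ I \subseteq u\mathfrak{a} \subseteq \mathfrak{a}^{[p]}$, pass from $\mathfrak{a}^{[p]}:I$ to $\mathfrak{a}^{[p]}:I^{[p]}$, and then invoke flatness of Frobenius to identify $\mathfrak{a}^{[p]}:I^{[p]}$ with $(\mathfrak{a}:I)^{[p]} = J^{[p]}$, finishing with Fedder's criterion for the $F$-purity claim. The only difference is cosmetic: you argue element-wise where the paper writes the same steps as a chain of colon-ideal containments.
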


\begin{proof}
     We have the following chain of containments of ideals: 
\begin{eqnarray*}
        \mathfrak{a}^{[p]}:\mathfrak{a} &\subseteq& \mathfrak{a}^{[p]}: IJ \\
        &=&  (\mathfrak{a}^{[p]}: I): J \\
        &\subseteq&  (\mathfrak{a}^{[p]}: I^{[p]}): J \\
        &=&  (\mathfrak{a}: I)^{[p]}: J \\
        &=&  J^{[p]}: J,
\end{eqnarray*}

    where the second to last equality follows from the flatness of the Frobenius map on $R$ (see Theorem \ref{TheoremF-singularities} ($1$)). By Fedder's criterion (Theorem \ref{theoremFedder}), it follows that if $R/ \mathfrak{a}$ is $F$-pure, then so is $R/J$.
\end{proof}

When ideals $I$ and $J$ are geometrically linked by a regular sequence $\mathfrak{a}$, the $F$-singularity of $\mathfrak{a}$ controls the $F$-singularities of $I$ and $J$.   

\begin{cor}
    Let $(R,\mathfrak{m})$ be a graded (or local) regular ring of prime characteristic $p$. Let $I$ and $J$ be proper ideals of $R$. Suppose that $R/I$ is a Cohen-Macaulay ring and that $I$ and $J$ are geometrically linked. If the rings $R/I$, $R/J$, and $R/(I+J)$ are $F$-injective, then they are each $F$-pure. 
\end{cor}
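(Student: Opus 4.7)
The plan is to reduce the whole statement to showing that the complete intersection ring $R/\mathfrak{a}$ is $F$-pure, and then invoke Lemma \ref{lemmaMain}. Since $R/\mathfrak{a}$ is Gorenstein (being a complete intersection), part $(4)$ of Theorem \ref{TheoremF-singularities} tells us that for $R/\mathfrak{a}$ it suffices to show $F$-injectivity. The ring $R/(I+J)$ is Gorenstein by Proposition \ref{propPS}(3), so the same theorem lets $F$-injectivity of $R/(I+J)$ be upgraded to $F$-purity immediately, with no additional work.

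Set $d = \dim R/I = \dim R/J = \dim R/\mathfrak{a}$. Proposition \ref{propPS}(3) forces $\dim R/(I+J) = d-1$, and the geometric assumption gives $\mathfrak{a} = I \cap J$. Hence we have the Mayer--Vietoris short exact sequence of $R$-modules
\[ 0 \to R/\mathfrak{a} \to R/I \oplus R/J \to R/(I+J) \to 0, \]
whose maps, being $R$-linear, commute with the Frobenius action. Applying local cohomology and using the Cohen--Macaulay hypotheses (with Proposition \ref{propPS}(1) supplying Cohen--Macaulayness for $R/J$), the long exact sequence collapses to a Frobenius-equivariant short exact sequence
\[ 0 \to H^{d-1}_{\mathfrak{m}}(R/(I+J)) \to H^d_{\mathfrak{m}}(R/\mathfrak{a}) \to H^d_{\mathfrak{m}}(R/I) \oplus H^d_{\mathfrak{m}}(R/J) \to 0. \]

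On a Cohen--Macaulay ring, $F$-injectivity is detected by Frobenius acting injectively on the top local cohomology, so the hypothesis ensures that Frobenius is injective on each of the two outer terms. A snake lemma diagram chase then shows Frobenius is injective on $H^d_{\mathfrak{m}}(R/\mathfrak{a})$, and since this is the only nonvanishing local cohomology of the Cohen--Macaulay ring $R/\mathfrak{a}$, we conclude $R/\mathfrak{a}$ is $F$-injective. By Theorem \ref{TheoremF-singularities}(4), it is in fact $F$-pure. Finally, applying Lemma \ref{lemmaMain} to the linkage identities $I = \mathfrak{a}:J$ and $J = \mathfrak{a}:I$ transports $F$-purity from $R/\mathfrak{a}$ to both $R/I$ and $R/J$, completing the argument.

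The one point that requires some care is the assertion that the local cohomology long exact sequence is Frobenius-equivariant. This is essentially functoriality: the natural Frobenius action on $H^*_{\mathfrak{m}}(-)$ commutes with the maps induced by $R$-linear morphisms and with the connecting homomorphisms of a short exact sequence of $R$-modules, so the entire long exact sequence comes equipped with a compatible Frobenius, which is all the snake lemma step needs.
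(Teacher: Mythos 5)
Your argument is correct, and its core is the same as the paper's: form the Mayer--Vietoris sequence $0 \to R/\mathfrak{a} \to R/I \oplus R/J \to R/(I+J) \to 0$ using $\mathfrak{a} = I \cap J$, pass to a Frobenius-equivariant short exact sequence of top local cohomology modules via the Cohen--Macaulay hypotheses, deduce that Frobenius is injective on $H^d_{\mathfrak{m}}(R/\mathfrak{a})$, upgrade $F$-injectivity of the complete intersection $R/\mathfrak{a}$ to $F$-purity by Theorem \ref{TheoremF-singularities}(4), and then transport $F$-purity to $R/I$ and $R/J$ by Lemma \ref{lemmaMain}. The one place you diverge is the ring $R/(I+J)$: you observe it is Gorenstein by Proposition \ref{propPS}(3) and conclude $F$-purity directly from its assumed $F$-injectivity via Theorem \ref{TheoremF-singularities}(4), whereas the paper instead establishes the containment $(I+J)^{[p]}:(I+J) \supseteq \mathfrak{a}^{[p]}:\mathfrak{a}$ (by intersecting the two containments from Lemma \ref{lemmaMain}) and invokes Fedder's criterion. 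Your route for this piece is shorter and uses less machinery; the paper's route has the minor advantage of not needing the Gorenstein property of $R/(I+J)$, only the already-established $F$-purity of $R/\mathfrak{a}$. Both are valid.
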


\begin{proof}
    By Proposition \ref{propPS}, the ring $R/J$ is Cohen-Macaulay. As $I$ and $J$ are geometrically linked, there exists an ideal $\mathfrak{a}$ generated by an $R$-regular sequence such that $J = \mathfrak{a}:I$ and $I = \mathfrak{a}:J$. Thus, we have the following short exact sequence:
    \[0 \rightarrow R/\mathfrak{a} \rightarrow R/I \oplus R/J \rightarrow R/(I+J) \rightarrow 0.\] 
    
     Let $d$ denote the Krull dimension of $R/I$. By Proposition \ref{propPS}, we get $\dim(R/J)=d$ and $\dim(R/(I+J)) = d-1$. The above short exact sequence induces an exact sequence of local cohomology modules with support in $\mathfrak{m}$. The natural Frobenius action on these local cohomology modules gives the following commutative diagram: 

    {\centering
    \begin{tikzcd}
0\arrow[r] & H_\mathfrak{m}^{d-1}(R/(I+J)) \arrow[r] \arrow[d, "F^e"] &  H_\mathfrak{m}^{d}(R/\mathfrak{a}) \arrow[r] \arrow[d,"F^e"] & H_\mathfrak{m}^{d}(R/I) \oplus H_\mathfrak{m}^{d}(R/J) \arrow[r]\arrow[d,"F^e"]  & 0\\
0\arrow[r] & H_\mathfrak{m}^{d-1}(R/(I+J)) \arrow[r] &  H_\mathfrak{m}^{d}(R/\mathfrak{a}) \arrow[r] & H_\mathfrak{m}^{d}(R/I) \oplus H_\mathfrak{m}^{d}(R/J) \arrow[r]  & 0.
\end{tikzcd}

}
As the rings $R/I$, $R/J$, and $R/(I+J)$ are $F$-injective, by the short five lemma we have that the natural Frobenius action on $H_\mathfrak{m}^{d}(R/\mathfrak{a})$ is injective, and thus $R/\mathfrak{a}$ is $F$-injective. Since $\mathfrak{a}$ is a complete intersection, by Theorem \ref{TheoremF-singularities} ($4$), it is also $F$-pure. Thus $R/I$ and $R/J$ are $F$-pure by Lemma \ref{lemmaMain}.

Further, by Lemma \ref{lemmaMain}, we have
\begin{eqnarray*}
    (I+J)^{[p]}:(I+J) &=& ((I+J)^{[p]}:I) \cap ((I+J)^{[p]}:J) \\
    &\supseteq& (J^{[p]}:J) \cap (I^{[p]}:I) \\
    &\supseteq& \mathfrak{a}^{[p]}:a .
\end{eqnarray*}

It follows from Fedder's criterion (Theorem \ref{theoremFedder}) that the ring $R/(I+J)$ is $F$-pure.
\end{proof}

\begin{cor}\label{corMain}
    Let $(R,\mathfrak{m})$ be a graded (or local) regular ring of prime characteristic $p$ and let be $I$ an equidimensional ideal of $R$. If $\mathfrak{a} \subsetneq I$ is an ideal generated by a regular sequence of length $\height (I)$, then the ideal $\mathfrak{a}^{[p]}:\mathfrak{a}$ is contained in $I^{[p]}:I$. In particular, if $R/\mathfrak{a}$ is $F$-injective, then $R/I$ is $F$-pure. 
\end{cor}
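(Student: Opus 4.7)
The approach is to deduce the containment $\mathfrak{a}^{[p]}:\mathfrak{a} \subseteq I^{[p]}:I$ from Lemma~\ref{lemmaMain} by swapping the roles of $I$ and its Peskine--Szpiro partner $J := \mathfrak{a}:I$. The crucial input that legitimizes this swap is Proposition~\ref{propUnmixed}: since $I$ is equidimensional of height $g = \height(I)$ and $\mathfrak{a} \subsetneq I$ is generated by a regular sequence of the same length $g$, the proposition yields $I \sim_{\mathfrak{a}} J$, and in particular $\mathfrak{a}:J = I$. Note that $J$ is proper, for otherwise $I \subseteq \mathfrak{a}$, contradicting $\mathfrak{a} \subsetneq I$; and $I$ is proper by hypothesis.

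With the identity $I = \mathfrak{a}:J$ in hand, I invoke Lemma~\ref{lemmaMain} with the ideals $J$ and $I$ playing the roles of ``$I$'' and ``$J$'' of that lemma, respectively. The conclusion is precisely the desired containment
\[
\mathfrak{a}^{[p]}:\mathfrak{a} \;\subseteq\; I^{[p]}:I.
\]

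For the ``in particular'' clause, I would note that $\mathfrak{a}$ is generated by a regular sequence in a regular ring, so $R/\mathfrak{a}$ is a complete intersection, hence Gorenstein. By Theorem~\ref{TheoremF-singularities}(4), an $F$-injective Gorenstein ring is automatically $F$-pure, so the assumption promotes $R/\mathfrak{a}$ to $F$-purity. Fedder's criterion (Theorem~\ref{theoremFedder}) then supplies $\mathfrak{a}^{[p]}:\mathfrak{a} \nsubseteq \mathfrak{m}^{[p]}$; combining with the containment just established yields $I^{[p]}:I \nsubseteq \mathfrak{m}^{[p]}$, and a second application of Fedder's criterion concludes that $R/I$ is $F$-pure.

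The only real obstacle is recognizing the symmetry afforded by Peskine--Szpiro's Proposition~\ref{propUnmixed}; once this symmetry is exploited, the statement is a direct corollary of Lemma~\ref{lemmaMain} together with a routine appeal to Fedder's criterion and the equivalence of $F$-injectivity and $F$-purity for Gorenstein rings.
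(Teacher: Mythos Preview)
Your proposal is correct and follows essentially the same approach as the paper's proof: invoke Proposition~\ref{propUnmixed} to obtain $I = \mathfrak{a}:(\mathfrak{a}:I)$, then apply Lemma~\ref{lemmaMain}, upgrading the $F$-injectivity of the Gorenstein ring $R/\mathfrak{a}$ to $F$-purity via Theorem~\ref{TheoremF-singularities}(4). The paper compresses the ``in particular'' clause by citing Lemma~\ref{lemmaMain}'s own $F$-purity consequence rather than unpacking Fedder's criterion twice as you do, but the substance is identical.
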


\begin{proof}
    By Proposition \ref{propUnmixed}, we have $I = \mathfrak{a}:(\mathfrak{a}:I)$. Thus, we are done by Lemma \ref{lemmaMain} in view of Theorem \ref{TheoremF-singularities} (4).
\end{proof}

The following statement is true more generally for any $F$-singularity; it is useful for us when dealing with the $F$-regularity of generic links and residual intersections.

\begin{lem}\label{lemmaAnyGensOkay}
    Let $(R,\mathfrak{m})$ and $(S,\mathfrak{n})$ be graded (or local) regular rings. Let $I\subseteq R$ and $J \subseteq S$ be ideals. If $(R,I) \equiv (S,J)$, then $R/I$ is (strongly) $F$-regular if and only if $S/J$ is (strongly) $F$-regular. 
\end{lem}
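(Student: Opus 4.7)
The plan is to unpack the equivalence $(R,I) \equiv (S,J)$ and reduce the lemma to a standard ascent/descent fact about adjoining variables. By definition, there is a ring isomorphism $\varphi\colon R[X] \to S[Z]$ with $\varphi(IR[X]) = JS[Z]$, which descends to a ring isomorphism
\[
\bar{\varphi}\colon R[X]/IR[X] \xrightarrow{\ \sim\ } S[Z]/JS[Z].
\]
Using the canonical identifications $R[X]/IR[X] \cong (R/I)[X]$ and $S[Z]/JS[Z] \cong (S/J)[Z]$, the proof reduces to the auxiliary claim: for an $F$-finite Noetherian ring $A$ of prime characteristic and any finite set of indeterminates $Y$, the ring $A$ is (strongly) $F$-regular if and only if $A[Y]$ is. Granting the auxiliary claim, apply it to $A = R/I$ with $Y = X$ and to $A = S/J$ with $Y = Z$, and chain the two equivalences through $\bar{\varphi}$ to finish.

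For the auxiliary claim itself, the ascent direction ($A$ strongly $F$-regular $\Rightarrow A[Y]$ strongly $F$-regular) is a standard consequence of the fact that $A \to A[Y]$ is smooth of relative dimension $|Y|$ and that strong $F$-regularity is preserved under smooth base change; alternatively, one checks the splitting criterion directly using that $A[Y]$ is free as an $A$-module. For the descent direction ($A[Y] \Rightarrow A$), the evaluation map $A[Y] \to A$ sending every variable in $Y$ to $0$ is an $A$-algebra retraction of the inclusion $A \hookrightarrow A[Y]$, so $A$ is a direct summand of $A[Y]$ as an $A$-module; strong $F$-regularity descends to such direct summands by Hochster--Huneke, yielding the claim.

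The main obstacle is really bookkeeping rather than mathematical: one must confirm that $A[Y]$ stays within the framework used throughout the paper. This is routine --- $F$-finiteness is preserved when adjoining finitely many variables, and in the graded setting one assigns each $Y_i$ a positive degree, keeping $A[Y]$ $\mathbb{N}$-graded over the same degree-zero field; thus the paper's standing convention (via Theorem~\ref{TheoremF-singularities}(5)) identifying weak, ordinary, and strong $F$-regularity applies uniformly on both sides, and the statement for ``$F$-regular'' follows from the one for ``strongly $F$-regular.''
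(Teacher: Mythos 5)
Your proposal is correct and follows the same route as the paper: reduce via the isomorphism $\varphi$ to the fact that $A$ is (strongly) $F$-regular if and only if $A[Y]$ is, which the paper simply cites from Ma--Polstra. Your added sketch of that auxiliary fact (ascent via the free/smooth extension, descent via the evaluation retraction and descent of strong $F$-regularity to direct summands) is accurate but not needed beyond the citation.
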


\begin{proof}
The assertion follows from the fact that a ring $A$ is (strongly) $F$-regular if and only if $A[x]$ is (strongly) $F$-regular where $x$ is an indeterminate; see \cite[Chapter 7]{MP}.
\end{proof}

\section{\texorpdfstring{$F$}{F}-Regularity of Determinantal Rings\\ Defined by Maximal Minors}\label{Sec:MaxMinors}

While it has been shown that generic determinantal rings are $F$-regular in positive characteristic in \cite[\S 7]{HH2}, the proof is fairly difficult and requires the application of several deep results as discussed in the introduction. We provide a simple proof for the $F$-regularity of generic determinantal rings defined by maximal minors by applying Corollary \ref{corMain} along with Glassbrenner's Criterion (Theorem \ref{theoremGlassbrenner}).

In order to apply Corollary \ref{corMain}, we begin with finding a regular sequence in the determinantal ideal $I_t(X)$ of the appropriate length.

\begin{lem}\label{lemmaRegularSecDet}
    Let $X = (x_{i,j})$ be a $t\times n$ matrix of indeterminates where $n \geq t$, and let $K[X]$ be a polynomial ring over a field $K$. Then \[[1,t],[2,t+1],\dots,[n-t+1,n]\] is a regular sequence of length $n-t+1$ contained in the ideal $I_t(X)$. 
\end{lem}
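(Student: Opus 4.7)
The plan is to reduce the claim to a height computation and then perform that computation using a Gr\"obner-basis style argument. Since $K[X]$ is a Cohen-Macaulay ring and the listed minors are homogeneous, a sequence of $r$ homogeneous elements in $K[X]$ is a regular sequence if and only if the ideal it generates has height $r$; hence it suffices to prove that $J \colonequals ([1,t], [2,t+1], \ldots, [n-t+1, n])$ has height $n-t+1$. The containment $J \subseteq I_t(X)$ together with $\height I_t(X) = n-t+1$ immediately yields $\height J \leq n-t+1$, so only the lower bound requires work.

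To establish $\height J \geq n-t+1$, I would pass to an initial ideal. Fix the lexicographic monomial order on $K[X]$ in which $x_{j,k} > x_{j',k'}$ whenever $j<j'$, or $j=j'$ and $k<k'$. With respect to this order, the leading term of the determinant of a generic $t \times t$ matrix is well known to be the product of its main-diagonal entries. Applied to the submatrix of $X$ on columns $i, i+1, \ldots, t+i-1$, this tells us that the leading term of the minor $[i, t+i-1]$ is the monomial
\[
m_i \colonequals x_{1,i}\, x_{2,\, i+1}\, \cdots \, x_{t,\, t+i-1}.
\]

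The crucial observation is that $m_1, \ldots, m_{n-t+1}$ are supported on pairwise disjoint sets of variables: the variable $x_{j,k}$ appears in $m_i$ precisely when $k = i + j - 1$, so the matrix position $(j,k)$ determines $i = k-j+1$ uniquely. Monomials supported on disjoint sets of variables form a regular sequence in a polynomial ring, so the ideal $(m_1, \ldots, m_{n-t+1})$ has height $n-t+1$. Since $\initial(J) \supseteq (m_1, \ldots, m_{n-t+1})$, and since $K[X]/J$ and $K[X]/\initial(J)$ share the same Hilbert series (and hence the same Krull dimension), we obtain
\[
\height J \;=\; \height \initial(J) \;\geq\; \height (m_1, \ldots, m_{n-t+1}) \;=\; n-t+1,
\]
as desired. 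I do not anticipate a serious obstacle; the only non-routine input is the standard and elementary fact that the lexicographic order picks out the main diagonal of a generic determinant as the leading term.
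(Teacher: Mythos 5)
Your proof is correct, but it takes a genuinely different route from the paper's. The paper proves the height lower bound by an explicit specialization: it defines a ring map $\varphi$ sending $X$ to a one-row ``Hankel'' matrix in the variables $x_{1,t},\dots,x_{1,n}$, checks that $\ker\varphi$ is generated by a regular sequence of linear forms, computes that $\varphi([i,t+i-1])=x_{1,t+i-1}^t+f_i$ with $f_i$ in the earlier variables (so the images cut out the maximal ideal of the target up to radical), and then transfers the height bound back via Lemma \ref{lemmaHeight}. You instead pass to the initial ideal for the diagonal lex order, observe that the leading terms $m_i=x_{1,i}x_{2,i+1}\cdots x_{t,t+i-1}$ have pairwise disjoint supports (since $(j,k)$ determines $i=k-j+1$), and conclude $\height J=\height\initial(J)\ge\height(m_1,\dots,m_{n-t+1})=n-t+1$. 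Both are degeneration arguments and both steps you use are sound: the minors are homogeneous, so the Hilbert-series argument for $\dim K[X]/J=\dim K[X]/\initial(J)$ applies, and monomials in disjoint variables do form a regular sequence. What your route buys is brevity and the avoidance of Lemma \ref{lemmaHeight} and the triangularity bookkeeping for the $f_i$; what it costs is reliance on the (standard but not entirely elementary) fact that passing to an initial ideal preserves dimension, whereas the paper's specialization is self-contained. Note that the paper itself invokes the same diagonal-leading-term observation in Lemma \ref{lemmaNotInDet}, so your argument fits naturally with the toolkit already in use.
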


\begin{proof}
    Define a map $\varphi: K[X] \rightarrow k[x_{1,t},x_{1,t+1},\dots,x_{1,n}]$ such that \[\varphi(x_{i,j}) = \begin{cases}
        x_{1,i+j-1} & \text{for } t \leq i+j -1 \leq n\\
        0 & \text{otherwise.}
    \end{cases}\]

    Notice that kernel of $\varphi$ is generated by \[\{x_{i,j} -  x_{1,i+j-1} \; \vert \; t \leq i+j -1 \leq n\} \cup \{ x_{i,j} \; \vert \;  \text{otherwise }\}, \] which is clearly a regular sequence in $K[X]$. Furthermore, notice that the image of  the matrix $X$ in $K[x_{1,t},x_{1,t+1},\dots,x_{1,n}]$ is

    \[\begin{bNiceMatrix}
        0 & \Ldots  &  & 0& x_{1,t} & x_{1,t+1} & \Ldots & x_{1,n}\\
        0 & \Ldots & 0 & x_{1,t}& x_{1,t+1}  & \Ldots & x_{1,n} & 0 \\
        \Vdots & \Iddots & \Iddots & \Iddots  &  & \Iddots & \Iddots & \Vdots \\
        0 & x_{1,t} & x_{1,t+1}& \Ldots & x_{1,n} & 0 & \Ldots & 0\\
        x_{1,t} & x_{1,t+1}& \Ldots & x_{1,n} & 0 & \Ldots & & 0
    \end{bNiceMatrix}.\]

    On expanding determinants along the first column, we get \[\begin{array}{rcl}
         \varphi([1,t]) &=& x_{1,t}^t, \\
         \varphi([2,t+1]) &=&  x_{1,t+1}^t + f_2,\\
         &\vdots& \\
         \varphi([i,t+i-1]) &=& x_{1,t+i-1}^t + f_{i},\\
         &\vdots& \\
         \varphi([n-t+1,n]) &=& x_{1,n}^t + f_n,
    \end{array}\]
    where $f_i \in (x_{1,t}, \dots, x_{1,t+i-2})$. Let $\underline{\alpha} = \{[1,t],[2,t+1],\dots,[n-t+1,n]$\}; it follows that $\sqrt{(\varphi(\underline{\alpha}))} = (x_{1,t},\dots,x_{1,n})$. By Lemma \ref{lemmaHeight}, we have that \[n-t+1 \geq\height((\underline{\alpha})) \geq \height(\varphi(\underline{\alpha})) = n-t+1,\] so we have equality throughout.
\end{proof}

The following lemma will help us apply Glassbrenner's Criterion (Theorem \ref{theoremGlassbrenner}). 

\begin{lem}\label{lemmaNotInDet}
    Let $X = (x_{i,j})$ be a $t\times n$ matrix of indeterminates where $n \geq t>1$, let $K[X]$ be a polynomial ring such that $K$ is a field of characteristic $p>0$, and let $\mathfrak{m}$ be the homogeneous maximal ideal. Then \[x_{1,n}([1,t][2,t+1]\dots[n-t+1,n])^{p-1} \notin m^{[p]}.\] 
\end{lem}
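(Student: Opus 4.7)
The plan is to exhibit an explicit monomial in the expansion of
\[g \;:=\; x_{1,n}\bigl([1,t][2,t+1]\cdots[n-t+1,n]\bigr)^{p-1}\]
all of whose variable exponents are strictly less than $p$; any such monomial lies outside $\mathfrak{m}^{[p]}$, and hence so does $g$. The natural candidate is obtained by selecting the identity-permutation (``main diagonal'') term $x_{1,i}x_{2,i+1}\cdots x_{t,t+i-1}$ from every copy of every minor $[i,t+i-1]$. Taking the $(p-1)$th power and multiplying over $i=1,\dots,n-t+1$ yields the monomial
\[M \;:=\; x_{1,n}\cdot\prod_{(a,b)\in D}x_{a,b}^{\,p-1}, \qquad D := \bigl\{(a,b) : 1\leq a\leq t,\ a\leq b\leq a+n-t\bigr\}.\]
Because $t>1$, the pair $(1,n)$ violates $n\leq 1+n-t$, so $(1,n)\notin D$. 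Therefore in $M$ the variable $x_{1,n}$ appears only to exponent $1$ while every other variable appears to exponent $p-1$ or $0$; all exponents are $<p$ and so $M\notin\mathfrak{m}^{[p]}$.

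What remains is to check that $M$ really has nonzero coefficient in $g$. Expanding
\[\prod_{i=1}^{n-t+1}[i,t+i-1]^{p-1} \;=\; \sum_{(\sigma_{i,k})}\ \prod_{i,k}\operatorname{sgn}(\sigma_{i,k})\cdot\prod_{j=1}^{t}x_{\sigma_{i,k}(j),\,i+j-1},\]
where the outer sum runs over tuples of permutations $\sigma_{i,k}\in S_t$ indexed by $i\in\{1,\dots,n-t+1\}$ and $k\in\{1,\dots,p-1\}$, I would show that the only tuple contributing the monomial $M/x_{1,n}$ is the all-identity tuple, giving coefficient $(+1)^{(n-t+1)(p-1)}=+1\neq 0$ in $K$.

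The uniqueness is a clean induction on $j\in\{1,\dots,t\}$: assuming $\sigma_{i,k}$ fixes $1,\dots,j-1$ for every $(i,k)$, I would inspect the columns $b=j,j+1,\dots,j+n-t$ in order. The variable $x_{j,b}$ can only receive contributions from factors $(i,j')$ with $i+j'-1=b$, and the inductive hypothesis together with the fact that each $\sigma_{i,k}$ is a permutation forces $j'\geq j$; then moving $b$ from $j$ upward pins $\sigma_{b-j+1,k}(j)=j$ for all $k$, one $i$-value at a time, until every $\sigma_{i,k}$ has been shown to fix $j$. The base case $j=1$ is of the same flavor, sweeping columns $b=1,2,\dots,n-t+1$ and using only that $\sigma_{1,k}(1)=1$ already excludes $\sigma_{1,k}(2)=1$, etc.

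I expect the real content to lie in choosing the right monomial, not in the combinatorial verification. An ``obvious'' alternative such as the anti-diagonal expansion would push the exponent of $x_{1,n}$ up to $p$ after multiplying by the extra $x_{1,n}$ factor, and the argument would collapse. The hypothesis $t>1$, together with the choice to multiply by $x_{1,n}$ specifically (rather than, say, $x_{t,n}$), is exactly tailored so that $(1,n)\notin D$; once this is noticed, the diagonal-band monomial $M$ is forced on us and the uniqueness step falls out essentially for free.
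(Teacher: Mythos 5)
Your proposal is correct and rests on exactly the same key monomial as the paper's proof: the diagonal-band monomial $x_{1,n}\prod_{0\le j-i\le n-t}x_{i,j}^{p-1}$, with the hypothesis $t>1$ used precisely to keep $(1,n)$ out of the band so that every exponent stays below $p$. The only difference is how the nonvanishing of this monomial's coefficient is certified. The paper introduces the lexicographic order with $x_{1,1}>x_{1,2}>\dots>x_{t,n}$, observes that $\initial_<\bigl([i,t+i-1]\bigr)$ is the main-diagonal product, and invokes $\initial_<(fg)=\initial_<(f)\initial_<(g)$, so the target monomial is the initial term of the whole product and automatically survives with a unit coefficient. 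You instead expand each minor by the Leibniz formula and run an induction over rows, sweeping columns to pin every permutation in the tuple to the identity; this is a valid, more elementary replacement for the term-order multiplicativity, and your induction does close up correctly (the exponent count of $x_{j,b}$ forces $\sigma_{b-j+1,k}(j)=j$ one column at a time, exhausting all $i\in\{1,\dots,n-t+1\}$). What the paper's route buys is brevity and reusability --- the same one-line term-order argument is recycled in Lemmas \ref{lemmaNotInMax} and \ref{lemmaNotInGeneric} --- while your route avoids any appeal to Gr\"obner-type machinery at the cost of the combinatorial uniqueness check.
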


\begin{proof}
    First notice that $\mathfrak{m}^{[p]}$ is a monomial ideal. Thus, for $f \in K[X]$, $f \in \mathfrak{m}^{[p]}$ if and only if each monomial term of $f$ is in $m^{[p]}$.
    
    We will use the lexicographical order induced from the variable order \[x_{1,1} > x_{1,2} > \dots > x_{1,n} > x_{2,1} > \dots > x_{2,n} > \dots > x_{t,n}.\] For a polynomial $f$, let $\initial_<(f)$ denote the initial monomial of $f$ with respect to this order. By determinant expansion along the first row, we get \[\initial_<\big([i,t+i-1]\big) = x_{1,i}x_{2,i+1}\dots x_{t,t+i-1},\] which is just the product along the main diagonal of the minor $[i,t+i-1]$. Recall that for polynomials $f$ and $g$, $\initial_<(fg) =\initial_<(f)\initial_<(g)$. Thus 
    \begin{eqnarray*}
        \initial_<\big(x_{1,n}([1,t][2,t+1]\dots[n-t+1,n])^{p-1}\big) &=& x_{1,n}\big(\initial_<([1,t])\dots\initial_<([n-t+1,n])\big)^{p-1}\\
        &=& x_{1,n}\Bigg(\prod_{0 \leq j-i \leq n-t} x_{i,j}^{p-1}\Bigg).
    \end{eqnarray*}
We get $\initial_<\bigg(x_{1,n}\big([1,t][2,t+1]\dots[n-t+1,n]\big)^{p-1}\bigg) \notin m^{[p]}$, and we are done.
\end{proof}

The last ingredient that we need to prove the main result of this section is the following well-known lemma (see, for example, \cite[Proposition 2.4]{BV}). We include the proof since it is an elementary idea used repeatedly in this paper.

\begin{lem}\label{lemmaInvertx11}
    Consider the matrices $X = (x_{i,j})$ of indeterminates where $1 \leq i \leq m$, $1 \leq j \leq n$, and $Y = (y_{i,j})$ where $2 \leq i \leq m$, $2 \leq j \leq n$. Set $R = K[X]$ and $R' = K[Y]$. Then the map
    
    \[R'[x_{1,1,}, \ldots, x_{m,1},x_{1,2}, \ldots, x_{1,n}]_{x_{1,1}} \longrightarrow R_{x_{1,1}} \quad \text{with} \quad y_{i,j} \mapsto x_{i,j} - x_{i,1}x_{1,j}x_{1,1}^{-1} \]
    is an isomorphism. Moreover, $R_{x_{1,1}}$ is a free $R'$-module, and for each $t \geq 1$, one has
    \[I_t(X)R_{x_{1,1}} = I_{t-1}(Y)R_{x_{1,1}}\]
    under this isomorphism.
\end{lem}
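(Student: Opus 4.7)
The plan is to reduce the entire statement to an elementary row-and-column reduction using $x_{1,1}$ as a pivot, which is legitimate once $x_{1,1}$ is inverted.

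First, I would establish the ring isomorphism. Let $S = R'[x_{1,1},\ldots,x_{m,1},x_{1,2},\ldots,x_{1,n}]_{x_{1,1}}$ and let $\varphi : S \to R_{x_{1,1}}$ be the map in the statement. Define $\psi : R_{x_{1,1}} \to S$ by $\psi(x_{1,1}) = x_{1,1}$, $\psi(x_{i,1}) = x_{i,1}$ for $i \geq 2$, $\psi(x_{1,j}) = x_{1,j}$ for $j \geq 2$, and $\psi(x_{i,j}) = y_{i,j} + x_{i,1}x_{1,j}x_{1,1}^{-1}$ for $i,j \geq 2$. A direct check on generators shows that $\varphi$ and $\psi$ are mutually inverse, so $\varphi$ is an isomorphism. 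Through this identification, $R_{x_{1,1}}$ is the polynomial ring $R'[x_{1,1}^{\pm 1},x_{i,1}\;(i\geq 2),\,x_{1,j}\;(j\geq 2)]$, which is free as an $R'$-module (with monomial basis in these extra variables and powers of $x_{1,1}^{\pm 1}$).

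Next, for the ideal equality, I would exhibit the pivot factorization of $X$ over $R_{x_{1,1}}$. Let $L$ be the $m\times m$ lower-unitriangular matrix with $(i,1)$-entry equal to $-x_{i,1}x_{1,1}^{-1}$ for $i\geq 2$, and let $U$ be the $n\times n$ upper-unitriangular matrix with $(1,j)$-entry equal to $-x_{1,1}^{-1}x_{1,j}$ for $j\geq 2$. A straightforward multiplication shows
\[
LXU \;=\; \begin{pmatrix} x_{1,1} & 0 \\ 0 & Y \end{pmatrix},
\]
where the $(i,j)$-entry of the lower right block ($i,j\geq 2$) is exactly $x_{i,j} - x_{i,1}x_{1,j}x_{1,1}^{-1} = y_{i,j}$. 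Since $L,U$ are invertible over $R_{x_{1,1}}$ (being unitriangular), multiplication by $L$ and $U$ is a composition of elementary row/column operations, and so
\[
I_t(X)R_{x_{1,1}} \;=\; I_t(LXU)R_{x_{1,1}}.
\]

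Finally, I would compute $I_t$ of the block matrix $\mathrm{diag}(x_{1,1}) \oplus Y$. Any $t$-minor chooses $t$ rows and $t$ columns; if it includes row $1$ but not column $1$ (or vice versa) the minor vanishes, if it includes both it equals $x_{1,1}$ times a $(t-1)$-minor of $Y$, and if it includes neither it is a $t$-minor of $Y$. Hence $I_t(LXU) = (x_{1,1})I_{t-1}(Y) + I_t(Y)$. Since $x_{1,1}$ is a unit in $R_{x_{1,1}}$ and $I_t(Y) \subseteq I_{t-1}(Y)$ (by Laplace expansion), this equals $I_{t-1}(Y)R_{x_{1,1}}$, which completes the proof.

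The computations are all elementary; the main thing to watch is just the bookkeeping in the matrix factorization and making sure the entry $y_{i,j}$ emerges with the correct sign and with $x_{1,1}^{-1}$ in the right place, which is what forces us to localize at $x_{1,1}$ in the first place.
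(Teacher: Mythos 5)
Your proposal is correct and follows essentially the same route as the paper: both proofs invert $x_{1,1}$, use it as a pivot for elementary row and column operations to reduce $X$ to the block matrix $\begin{pmatrix} x_{1,1} & 0 \\ 0 & Y \end{pmatrix}$, and read off the isomorphism, the freeness, and the equality $I_t(X)R_{x_{1,1}} = I_{t-1}(Y)R_{x_{1,1}}$ from that normal form. Your version merely makes explicit what the paper leaves implicit (the unitriangular matrices $L$, $U$ and the computation $I_t(LXU) = (x_{1,1})I_{t-1}(Y) + I_t(Y) = I_{t-1}(Y)R_{x_{1,1}}$), which is a welcome but not substantively different level of detail.
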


\begin{proof}
    We may invert $x_{1,1}$ and perform elementary row operations to transform $X$ into a matrix where $x_{1,1}$ is the only nonzero entry in the first column. After this, one may perform elementary column operations to obtain a matrix 
    \[\begin{bmatrix}
        x_{1,1} & 0 & \hdots & 0 \\
        0 & x'_{2,2} & \hdots & x'_{2,n}\\
        \vdots & \vdots & & \vdots \\
        0 & x'_{m,2} & \hdots & x'_{m,n}
    \end{bmatrix} \quad \text{where} \quad x'_{i,j} = x_{i,j} - x_{i,1}x_{1,j}x_{1,1}^{-1}; \]
    the asserted isomorphism is then $y_{i,j} \mapsto x'_{i,j}$. Note that the ideal $I_t(X)R_{x_{1,1}}$ is generated by the size $t$ minors of the displayed matrix, and hence equals $I_{t-1}(Y)R_{x_{1,1}}$. Finally, $R_{x_{1,1}}$ is a free $R'$-module since the ring extension
    \[K[x_{i,j} - x_{i,1}x_{1,j}x_{1,1}^{-1} | 2 \leq i \leq m, 2 \leq j \leq n] \subset K[X,x_{1,1}^{-1}]\]
    is obtained by adjoining indeterminates $x_{1,1,}, \ldots, x_{m,1},x_{1,2}, \ldots, x_{1,n}$ and inverting $x_{1,1}$.
\end{proof}

\begin{thm}\label{theoremDeterminantalSFR}
    Let $X = (x_{i,j})$ be a $t\times n$ matrix of indeterminates where $n \geq t$, and let $K[X]$ be a polynomial ring such that $K$ is (an $F$-finite field) of characteristic $p>0$, then the generic determinantal ring $R = K[X]/I_{t}(X)$ is (strongly) $F$-regular. 
\end{thm}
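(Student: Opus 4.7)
The plan is to induct on $t$ (with $n \geq t$ arbitrary), applying Glassbrenner's Criterion (Theorem \ref{theoremGlassbrenner}) in the inductive step with auxiliary element $s = x_{1,n}$. Since strong $F$-regularity is preserved and reflected under the faithfully flat extension $K \subseteq K(u)$ for an indeterminate $u$, we may assume $K$ is infinite so as to invoke Glassbrenner's Criterion.

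The base case $t = 1$ is immediate: $I_1(X)$ is the ideal generated by all entries of $X$, so $R = K[X]/I_1(X) \cong K$ is a field. For the inductive step ($t \geq 2$), assume the result for the generic determinantal ring of maximal minors of every $(t-1) \times n'$ matrix with $n' \geq t-1$. First, by the evident symmetric variant of Lemma \ref{lemmaInvertx11} (inverting $x_{1,n}$ in place of $x_{1,1}$), the localization $R_{x_{1,n}}$ is isomorphic to a localization of $\bigl(K[Y]/I_{t-1}(Y)\bigr)[z_1, \ldots, z_{t+n-1}]$, where $Y$ is a $(t-1) \times (n-1)$ matrix of indeterminates. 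Since $n-1 \geq t-1$, the inductive hypothesis gives that $K[Y]/I_{t-1}(Y)$ is strongly $F$-regular; as this property passes to polynomial extensions and localizations (see the proof of Lemma \ref{lemmaAnyGensOkay}), we conclude that $R_{x_{1,n}}$ is strongly $F$-regular.

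Next, set
\[ \mathfrak{a} := \bigl([1,t], [2,t+1], \ldots, [n-t+1, n]\bigr) \quad \text{and} \quad f := \prod_{i=1}^{n-t+1} [i,\, t+i-1]. \]
By Lemma \ref{lemmaRegularSecDet}, $\mathfrak{a}$ is generated by a regular sequence of length $n-t+1 = \height I_t(X)$ contained in $I_t(X)$, and $I_t(X)$ is equidimensional since it is Cohen--Macaulay of height $n-t+1$. Proposition \ref{propFedderCI} gives $f^{p-1} \in \mathfrak{a}^{[p]} : \mathfrak{a}$, and Corollary \ref{corMain} then yields
\[ f^{p-1} \in \mathfrak{a}^{[p]}:\mathfrak{a} \subseteq I_t(X)^{[p]}:I_t(X). \]
Finally, Lemma \ref{lemmaNotInDet} states $x_{1,n} f^{p-1} \notin \mathfrak{m}^{[p]}$, so $x_{1,n}\bigl(I_t(X)^{[p]}:I_t(X)\bigr) \nsubseteq \mathfrak{m}^{[p]}$. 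Together with the strong $F$-regularity of $R_{x_{1,n}}$, Glassbrenner's Criterion applied with $s = x_{1,n}$ and $e = 1$ gives that $R$ is strongly $F$-regular.

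The main obstacle is conceptual rather than computational: recognizing that the three carefully prepared ingredients---the explicit regular sequence from Lemma \ref{lemmaRegularSecDet}, the nonvanishing of $x_{1,n} f^{p-1}$ modulo $\mathfrak{m}^{[p]}$ from Lemma \ref{lemmaNotInDet}, and the inductive reduction via Lemma \ref{lemmaInvertx11}---slot together precisely through Corollary \ref{corMain} and Glassbrenner's Criterion. Once this is seen, the proof is strikingly short, which is the chief pedagogical point the authors aim to make in contrast to the Hochster--Huneke approach through the Fedder--Watanabe criterion.
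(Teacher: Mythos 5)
Your proof is correct and follows essentially the same route as the paper: induction on $t$, Glassbrenner's criterion with $s = x_{1,n}$, Lemma \ref{lemmaInvertx11} for the localized ring, and the combination of Corollary \ref{corMain} with Lemmas \ref{lemmaRegularSecDet} and \ref{lemmaNotInDet} to produce the element $x_{1,n}f^{p-1} \notin \mathfrak{m}^{[p]}$. Your preliminary reduction to an infinite base field is a careful touch the paper omits; also note that equidimensionality of $I_t(X)$ follows already from its primality, so the appeal to Cohen--Macaulayness is unnecessary.
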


\begin{proof}
    We proceed by induction on $t$. The statement is clear for $t = 1$. Now, we suppose that the statement is true for $t-1 \geq 1$ and show that it is true for $t$. To do this, we apply Glassbrenner's criterion (Theorem \ref{theoremGlassbrenner}) with $s = x_{1,n}$. Since $I_t(X)$ is prime \cite[Theorem 2.10]{BV}, it is clear that $x_{1,n} \notin I_t(X)$. Note that, by Lemma \ref{lemmaInvertx11}, we have the isomorphism \[R_{x_{1,n}} \cong \big( K[Y]/I_{t-1}(Y)\big)[x_{1,1},\dots,x_{1,n},x_{2,n},\dots,x_{t,n}][x_{1,n}^{-1}],\] where $Y = (y_{i,j})$ is a $(t-1) \times (n-1)$ matrix of indeterminates. It follows from induction that $R_{x_{1,n}}$ is $F$-regular. 
    
    Next, we must show that \[x_{1,n}(I_t(X)^{[p]}:I_t(X)) \not\subseteq \mathfrak{m}^{[p]}.\] Crucially, by Corollary \ref{corMain}, it is enough to show that $x_{1,n}(\mathfrak{a}^{[p]} : \mathfrak{a}) \not\subseteq \mathfrak{m}^{[p]}$ for some ideal $\mathfrak{a}$ contained in $I_t(X)$ generated by a length $n-t+1$ regular sequence. We use Lemma \ref{lemmaRegularSecDet} to choose \[\mathfrak{a} = ([1,t],[2,t+1],\dots,[n-t+1,n]).\] We are now done by Lemma \ref{lemmaNotInDet}. 
\end{proof}

\begin{rem}\label{RemarkDetFpure}
    Notice that the $F$-purity of generic determinantal rings defined by maximal minors directly follows from Fedder's criterion (Theorem \ref{theoremFedder}) on applying Corollary \ref{corMain} along with Lemmas \ref{lemmaRegularSecDet} and \ref{lemmaNotInDet}. \end{rem}

    It immediately follows from Theorem \ref{theoremDeterminantalSFR} and Lemma \ref{lemmaMain} that the link of the determinantal ring defined by maximal minors, that is, the ring $K[X_{t \times n}]/(\mathfrak{a}:I_t(X))$ is $F$-pure for the choice of regular sequence \[\mathfrak{a} = ([1,t],[2,t+1],\dots,[n-t+1,n]).\] The main result of our paper, Theorem \ref{theoremGenericSFR}, says that this link (or any other link) of the determinantal ring defined by maximal minors has a deformation---the generic link of $K[X_{t \times n}]/I_t(X)$---which is strongly $F$-regular.

\section{\texorpdfstring{$F$}{F}-Regularity of the Generic Residual Intersections \\ of Graded Complete Intersection Rings} \label{Sec:ResIntSingularity}

In this section, we prove that the generic residual intersections of a graded and $F$-rational complete intersection ring is strongly $F$-regular. This strengthen the result \cite[Proposition 3.4]{CU} under the assumption that the regular sequence consists of homogeneous elements. The bulk of the work for this is done in the following theorem where the ideal $I$ is the homogeneous maximal ideal of $R$.   

\begin{thm}\label{theoremMaximalSFR}
     Let $R = K[x_1, \ldots, x_n]$ be a polynomial ring over a field, $\mathfrak{m}$ be the homogeneous maximal ideal of $R$, and $U$ be a matrix of indeterminates over $R$. Then,
     \begin{enumerate}
         \item If $K$ is a field of characterstic zero and $J \subseteq S = R[U]$ is a generic residual intersection of $\mathfrak{m}$, then $S/J$ has rational singularities.
         
         \item If $K$ is an $F$-finite field of positive characteristic and $J \subseteq S = R[U]$ is a generic residual intersection of $\mathfrak{m}$, then $S/J$ is strongly $F$-regular.
     \end{enumerate}
\end{thm}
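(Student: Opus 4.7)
The plan is to focus on part (2); part (1) will then follow by standard spread-out and reduction mod $p$ arguments, since strong $F$-regularity in all sufficiently large characteristic reductions implies rational singularities in characteristic zero. To prove (2), I will apply Glassbrenner's criterion (Theorem \ref{theoremGlassbrenner}) to $S/J$ with the test element $t = x_n$, and use Corollary \ref{corMain} to reduce the non-containment modulo $\mathfrak{n}^{[p]}$ to an explicit monomial computation involving a regular sequence inside $J$.

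Set $\ell_i = \sum_j u_{ij} x_j$ and $\mathfrak{a} = (\ell_1, \ldots, \ell_s) \subseteq J$; these $s$ linear forms constitute a regular sequence in $S$. I split into two cases. When $s < n$, the variety $V(\mathfrak{a}) = \{UX = 0\}$ is the closure of a rank-$s(n-1)$ vector bundle over $\mathbb{A}^n_X \setminus \{0\}$, hence irreducible; combined with $\mathfrak{a}$ being a complete intersection (and therefore $S/\mathfrak{a}$ generically reduced and Cohen--Macaulay), $\mathfrak{a}$ is prime, and $J = \mathfrak{a}:\mathfrak{m} = \mathfrak{a}$ since $\mathfrak{m} \not\subseteq \mathfrak{a}$ by height. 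Here I take $\mathfrak{b} = \mathfrak{a}$. When $s = n$, the adjugate identity $(\operatorname{adj} U)U = (\det U) I_n$ gives $(\det U)\mathfrak{m} \subseteq \mathfrak{a}$, hence $\det U \in J$; then $\mathfrak{b} = (\ell_1, \ldots, \ell_{n-1}, \det U)$ is a regular sequence of length $n = \height(J)$ properly contained in $J$, because $\det U$ is a nonzerodivisor modulo the prime ideal $(\ell_1, \ldots, \ell_{n-1})$, being a nonzero polynomial purely in the $u$-variables.

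By Proposition \ref{propFedderCI} and Corollary \ref{corMain}, the element $(b_1 \cdots b_s)^{p-1}$ lies in $J^{[p]}:J$. For the Glassbrenner check: inverting $x_n$ turns $\mathfrak{m} S_{x_n}$ into the unit ideal, so $JS_{x_n} = \mathfrak{a} S_{x_n}$, and solving each $\ell_i = 0$ for $u_{in}$ identifies $(S/J)_{x_n}$ with a localization of a polynomial ring, hence a regular ring. It remains to exhibit a single monomial of $x_n (b_1 \cdots b_s)^{p-1}$ outside $\mathfrak{n}^{[p]}$. For $s < n$, the diagonal term $\prod_i u_{ii}^{p-1} \prod_{j \leq s} x_j^{p-1}$ (coming from the multi-index $(a_{ij}) = (p-1)\delta_{ij}$ in the multinomial expansion of $(\prod \ell_i)^{p-1}$) works: multiplying by $x_n$ keeps all exponents at most $p-1$. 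For $s = n$, I combine the diagonal $\prod_{i < n} u_{ii}^{p-1}$ from $\prod_{i < n}\ell_i^{p-1}$ with the permutation monomial $\prod_i u_{i,\sigma(i)}^{p-1}$ from $(\det U)^{p-1}$ for a derangement $\sigma$; because $\sigma$ is fixed-point-free, the two $u$-supports are disjoint, the $x$-support reduces to $\prod_{j < n} x_j^{p-1} \cdot x_n$, and all exponents stay at most $p-1$. A short enumeration of the $(a_{ij}, (\sigma_k))$ pairs realizing this target monomial confirms its coefficient is $\pm 1 \neq 0$.

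The main obstacle is the $s = n$ case. With $\mathfrak{b} = \mathfrak{a}$, every monomial of $(\prod \ell_i)^{p-1}$ has total $x$-degree $n(p-1)$ spread over exactly $n$ variables, forcing each $x_j$-exponent to equal $p - 1$ and leaving no room for the $x_n$ test factor without landing inside $\mathfrak{n}^{[p]}$. Substituting $\det U$ for $\ell_n$ frees the $x_n$-slot while injecting pure $u$-degree, and choosing a derangement avoids the $u_{ii}$-diagonal collision in the $u$-part. The remaining cancellation check — verifying that the chosen target monomial is not killed by other contributions — is the most delicate bookkeeping step, but reduces to a finite combinatorial computation over the cycles of $\sigma$.
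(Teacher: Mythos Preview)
Your main gap is that you only treat $s \le n$, whereas a generic $s$-residual intersection of $\mathfrak m$ is exactly the case $s \ge n$ (see Remark~\ref{remarkMaximalFixGens}). When $s>n$ one has $J=\mathfrak a+I_n(U)$ and $\height(J)=s$ by \cite[Example~3.4]{HU4}, so Corollary~\ref{corMain} demands a regular sequence of length $s$ inside $J$. Your claim that $\ell_1,\dots,\ell_s$ form a regular sequence is false here: $\mathfrak a\subset\mathfrak m S$ forces $\height(\mathfrak a)\le n<s$. Neither $\mathfrak a$ nor your $(\ell_1,\dots,\ell_{n-1},\det U)$ has the right length. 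The paper's substantive contribution is precisely Lemma~\ref{lemmaRegularSecMax}, which produces a length-$s$ regular sequence from the first $n-1$ linear forms together with the $s-n+1$ maximal minors of $U$ on blocks of adjacent rows; proving regularity of this sequence requires a nontrivial specialization argument.

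Even at $s=n$, your cancellation claim is not ``$\pm 1$'' in general. If the derangement $\sigma$ has a cycle disjoint from $n$, then many exponent matrices $(a_{ij})$ hit the same target monomial and their contributions can cancel: for $n=4$, $\sigma=(12)(34)$, $p=2$ the coefficient is $\sum_{t=0}^{1}\binom{1}{t}^{3}(-1)^{1-t}=0$. Choosing $\sigma$ to be an $n$-cycle repairs this, but the paper avoids the issue entirely by isolating a single contribution as the leading term under a tailored lexicographic order (Lemma~\ref{lemmaNotInMax}), which also scales cleanly to the $s>n$ case you are missing.
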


\begin{rem}\label{remarkMaximalFixGens}
By Lemma \ref{lemResIntEquiv} and Lemma \ref{lemmaAnyGensOkay}, $F$-regularity and all other relevant properties are independent of the choice of a generating set of $\mathfrak{m}$. So, for the rest of this section, we fix the generators of $\mathfrak{m}$ to be $x_1,\dots,x_n$.

Given this, for $s \geq n$, let $U=(u_{i,j})$ be an $s \times n$ matrix of indeterminates, and let $\mathfrak{a} \subseteq \mathfrak{m}R[U]$ be the ideal generated by the entries of $U[x_1 \dots x_n]^T$. We fix $J = \mathfrak{a}:\mathfrak{m}R[U]$ to be our choice of a generic $s$-residual intersection. 
\end{rem}

The proof of the above theorem will consist of applying Corollary \ref{corMain} along with Glassbrenner's Criterion (Theorem \ref{theoremGlassbrenner}). Therefore, we again begin by finding an appropriate regular sequence of length $\height(J)$ contained in in the ideal $J$. 

\begin{lem}\label{lemmaRegularSecMax}
    Let $U=(u_{i,j})$ be an $s\times n$ matrix of indeterminants for $n>1$, and let $K[x_1,\dots,x_n][U]$ be a polynomial ring over a field $K$. Let $\Delta_1,\dots,\Delta_{s-n+1}$ be the set of $n \times n$ minors of $U$ consisting of adjacent rows. If $\underline{\beta}$ is the sequence consisting of the entries of \[\begin{bmatrix}
        u_{1,1} &\hdots& u_{1,n}\\
        \vdots && \vdots \\
        u_{n-1,1} &\hdots& u_{n-1,n}
    \end{bmatrix} \begin{bmatrix}
        x_1\\
        \vdots \\
        x_n
    \end{bmatrix}\] and $\Delta_1,\dots,\Delta_{s-n+1}$, then $\underline{\beta}$ is a regular sequence. 
\end{lem}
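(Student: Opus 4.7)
The plan is to prove that $(\underline{\beta})$ has height $s$ in the Cohen-Macaulay polynomial ring $R = K[x_1, \ldots, x_n][U]$; since $\underline{\beta}$ has precisely $s$ entries, Krull's theorem will then force $\underline{\beta}$ to be a regular sequence.

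To bound $\height(\underline{\beta})$ from below, I would apply Lemma \ref{lemmaHeight} to the surjection $\varphi \colon R \twoheadrightarrow K[U]$ defined by $x_i \mapsto 0$ for $i<n$ and $x_n \mapsto 1$. The kernel $(x_1, \ldots, x_{n-1}, x_n - 1)$ has height $n$ (as $R/\ker\varphi \cong K[U]$ has dimension $sn = \dim R - n$), so it is a regular sequence in the Cohen-Macaulay ring $R$ and in particular part of a system of parameters; note that Lemma \ref{lemmaHeight} imposes no homogeneity restriction on its parameters. Under $\varphi$, each $b_i$ maps to $u_{i,n}$ and each $\Delta_k$ is unchanged, so
\[
\varphi(\underline{\beta}) = (u_{1,n}, \ldots, u_{n-1,n}, \Delta_1, \ldots, \Delta_{s-n+1}) \subseteq K[U],
\]
and Lemma \ref{lemmaHeight} gives $\height(\underline{\beta}) \geq \height(\varphi(\underline{\beta}))$.

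The core of the argument is to prove $\height(\varphi(\underline{\beta})) = s$. The first $n-1$ entries of $\varphi(\underline{\beta})$ are indeterminates of $K[U]$, so it suffices to show that $(\bar\Delta_1, \ldots, \bar\Delta_{s-n+1})$ has height $s-n+1$ in the polynomial ring $\bar R \colonequals K[U]/(u_{1,n}, \ldots, u_{n-1,n})$. For this, I would use the lex order of Lemma \ref{lemmaNotInDet}, with $u_{1,1} > u_{1,2} > \cdots > u_{s,n}$. Iterated expansion along the first row shows that $\initial_<(\Delta_k) = u_{k,1}\, u_{k+1,2} \cdots u_{k+n-1,n}$, the main diagonal monomial of the corresponding submatrix. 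None of these factors are killed in $\bar R$: the last factor $u_{k+n-1,n}$ has row index $\geq n$, and every other factor $u_{k+i-1,i}$ (for $i < n$) sits in a column other than $n$. Therefore $\initial_<(\bar\Delta_k)$ is still this main diagonal monomial in $\bar R$. Moreover, for $k \neq k'$ the supports of these monomials are disjoint, since $u_{k+i-1,i} = u_{k'+j-1,j}$ forces $i = j$ and hence $k = k'$.

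A sequence of monomials with pairwise disjoint supports in a polynomial ring is clearly a regular sequence, so $(\initial_<(\bar\Delta_k))_k$ generates an ideal of height $s-n+1$ in $\bar R$. Since the $\bar\Delta_k$ are homogeneous of degree $n$ and passing to the initial ideal preserves height for homogeneous ideals, we obtain $\height(\bar\Delta_1, \ldots, \bar\Delta_{s-n+1}) \geq s - n + 1$, with equality by Krull. Assembling the two stages yields $\height(\varphi(\underline{\beta})) = s$, so $\height(\underline{\beta}) \geq s$ by Lemma \ref{lemmaHeight} and $= s$ by Krull, whence $\underline{\beta}$ is a regular sequence. The main delicacy is the choice of specialization $x_n \mapsto 1$: the more symmetric $x_n \mapsto 0$ collapses each $b_i$ to a product $u_{i,n} x_n$, creating a low-height component at $V(x_n)$ and losing most of the height contributed by the $b_i$'s; evaluating $x_n$ at a unit recovers these contributions as the independent indeterminates $u_{i,n}$, after which the initial-monomial argument finishes the proof.
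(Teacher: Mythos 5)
Your proof is correct, and it follows the same skeleton as the paper's: specialize along a regular sequence, bound the height of the image ideal from below via Lemma \ref{lemmaHeight}, and conclude with Krull's height theorem plus unmixedness in the Cohen--Macaulay polynomial ring. The execution differs in two ways worth noting. First, your specialization sends $x_n \mapsto 1$ and $x_i \mapsto 0$ for $i<n$, which is inhomogeneous but (as you observe) permitted by Lemma \ref{lemmaHeight} since $R/(x_1,\dots,x_{n-1},x_n-1)\cong K[U]$ has the right dimension; it has the pleasant effect of turning each bilinear form $b_i$ into the single variable $u_{i,n}$, so only the minors require further work. The paper instead stays graded: it transposes $U$ to an $n\times s$ matrix $W$, sends $x_n\mapsto 0$ and $x_i \mapsto w_{i,n-i}$, and identifies the $w$-variables along diagonals, collapsing $W$ to a Hankel-type matrix; the images of the sequence then become $w_{n-i,i}^2+f_i$ and $w_{1,n+i-1}^n+g_i$ with error terms lying in a triangular chain of face ideals, so the radical of the image ideal is visibly a height-$s$ prime generated by variables. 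Second, where the paper reads off the height from that explicit radical, you handle the minors by a leading-term argument (the main-diagonal monomials of $\Delta_1,\dots,\Delta_{s-n+1}$ survive modulo $(u_{1,n},\dots,u_{n-1,n})$ and have pairwise disjoint supports), which is very much in the spirit of the paper's Lemmas \ref{lemmaNotInDet} and \ref{lemmaNotInMax}. Both routes are sound; yours avoids the transposition and the somewhat fiddly diagonal identification at the cost of invoking the standard fact that $\dim(R/I)=\dim\big(R/\initial_<(I)\big)$ (equivalently, that a sequence whose leading terms form a regular sequence is itself a regular sequence).
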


\begin{proof}
Let $W$ be an $n\times s$ matrix of indeterminates over $K[x_1,\dots,x_n]$ and let $\underline{\gamma}$ be the sequence of the entries of \[[x_1\dots x_n]\begin{bmatrix}
        w_{1,1} &\hdots& w_{1,n}\\
        \vdots && \vdots \\
        w_{n-1,1} &\hdots& w_{n-1,n}
    \end{bmatrix}\] and $[1,n],\dots,[s-n+1,s]$, where $[i,i+n-1]$ are the $n\times n$ minors of $W$ consisting of columns $i$ through $i+n-1$. Notice that showing $\underline{\beta}$ is a regular sequence in $K[x_1,\dots,x_n][U]$ is equivalent to showing $\underline{\gamma}$ is a regular sequence in $K[x_1,\dots,x_n][W]$. So, for notational symmetry with Lemma \ref{lemmaRegularSecDet}, we will show that $\underline{\gamma}$ is a regular sequence.

Define a map $\varphi: K[x_1,\dots,x_n][W] \rightarrow K[W]$ such that \[\varphi(w_{i,j}) = \begin{cases}
    w_{1,i+j-1} & \text{for } n \leq i+j-1 \leq s \\
    w_{i,j} & \text{for } j  = n-i\\
    0 & \text{otherwise,}
\end{cases}\] and 
\[\varphi(x_i) = \begin{cases}
    0 & \text{for } i = n \\
    w_{i,n-i} & \text{otherwise.}
\end{cases}\]

The kernel of $\varphi$ is generated by the union of the sets \[\{w_{i,j}-w_{1,i+j-1} \;\vert \; n \leq i+j-1 \leq s\}, \{w_{i,j} \; \vert \; i+j < n \text{ or } i+j > s+1\},\{x_i-w_{i,n-i} \; \vert \; i \neq n\}, \text{and} \{x_n\},\] which is clearly a regular sequence. Furthermore, notice that \[\varphi([x_1 \dots x_n]) = [w_{1,n-1} \; w_{2,n-2}\dots w_{n-1,1}\;0]\] and \[\varphi(W)  = \begin{bNiceMatrix}
        0 & \Ldots && 0 & w_{1,n-1}& w_{1,n} & w_{1,n+1} & \Ldots & w_{1,s}\\
        0 & \Ldots &0& w_{2,n-2} & w_{1,n}& w_{1,n+1}  & \Ldots & w_{1,s} & 0 \\
        \Vdots &\Iddots& \Iddots & \Iddots & \Iddots & & \Iddots & \Iddots & \Vdots \\
        0&  &  & &  &  & &  & \\
        w_{n-1,1} & w_{1,n} & w_{1,n+1}& \Ldots & w_{1,s} & 0 & & \Ldots & 0\\
        w_{1,n} & w_{1,n+1}& \Ldots & w_{1,s} & 0 & & \Ldots &  & 0
    \end{bNiceMatrix}.\]

Note that each of the diagonals in the above display is constant, except for the first nonzero diagonal (from the left), which reads $w_{1,n-1}, w_{2,n-2}, \ldots, w_{n-1,1}$. This gives us \[\varphi(w_{i,1}x_1 + \dots w_{i,n}x_n ) = w_{n-i,i}^2 + f_i\] for $f_i \in (w_{n-i+1,i-1}, \dots w_{n-1,1})$ and $1 \leq i \leq n-1$ (where $f_1=0$) and, by expanding determinants along the last column,
\[\varphi\big([i,n+i-1]\big) = w_{1,n+i-1}^n + g_{i}\] for $g_i \in (w_{1,n+i},\dots, w_{1,s})$ and $1 \leq i \leq s-n+1$ (where $g_{s-n+1} = 0$). This gives us that $\sqrt{\mathfrak{(\underline{\gamma})}} = (w_{1,n-1},w_{2,n-2},\dots,w_{n-1,1},w_{1,n},w_{1,n+1},\dots,w_{1,s})$. By Lemma \ref{lemmaHeight}, we have that \[s \geq \height\big((\underline{\gamma})\big) \geq \height\big(\varphi\big((\underline{\gamma})\big)\big) = s,\] so we have equality throughout. 
\end{proof}

\begin{rem}\label{remarkGensCI}
    By \cite[Example 3.4]{HU4}, $J = \mathfrak{a}+I_n(U)$ and $\height(J) = s$. So, $\mathfrak{b} = (\beta)$ is an ideal generated by a length $s$ regular sequence properly contained in $J$. 
\end{rem}

The following lemma will help us apply Glassbrenner's Criterion (Theorem \ref{theoremGlassbrenner}). 

\begin{lem}\label{lemmaNotInMax}
    Let $U=(u_{i,j})$ be an $s\times n$ matrix of indeterminates for $n>1$, and let $S = K[x_1,\dots,x_n][U]$ be a polynomial ring over a field $K$. Let $\Delta_1,\dots,\Delta_{s-n+1}$ be the set of $n \times n$ minors of $U$ consisting of adjacent rows. If $\mathfrak{n}$ is the homogeneous maximal ideal of $S$, then \[x_1(\Delta_1\dots \Delta_{s-n+1})^{p-1}\prod_{i=1}^{n-1}(u_{i,1}x_1+\dots+u_{i,n}x_n)^{p-1} \notin \mathfrak{n}^{[p]}.\] 
\end{lem}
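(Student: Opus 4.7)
The plan is to mimic Lemma \ref{lemmaNotInDet}: exhibit a specific monomial in
\[
f \;=\; x_1(\Delta_1\cdots \Delta_{s-n+1})^{p-1}\prod_{i=1}^{n-1}(u_{i,1}x_1+\cdots+u_{i,n}x_n)^{p-1}
\]
in which every variable appears with exponent at most $p-1$, and verify that it has nonzero coefficient. Since $\mathfrak{n}^{[p]}$ is a monomial ideal, this will force $f\notin\mathfrak{n}^{[p]}$.

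My target monomial is
\[
M \;=\; x_1\cdot \prod_{j=2}^n x_j^{p-1}\cdot \prod_{k=1}^{s-n+1}\prod_{i=1}^n u_{k+i-1,\,i}^{\,p-1}\cdot \prod_{i=1}^{n-1} u_{i,\,i+1}^{\,p-1}.
\]
The $u$-variables appearing in $M$ are indexed by pairs $(a,b)$ with $a-b\in\{-1,0,1,\dots,s-n\}$; distinct pairs give distinct variables, so each $u$-variable in $M$ appears with exponent exactly $p-1$. Together with $x_1^{1}$ and $x_j^{p-1}$ for $j\ge 2$, this gives $M\notin\mathfrak{n}^{[p]}$. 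To show $M$ appears in $f$ with nonzero coefficient, I will use the weighted monomial order defined by
\[
w(u_{a,b})\;=\;-(a-b)(a-b+1),\qquad w(x_j)\;=\;j.
\]
Writing $h(t)=-t(t+1)$, a direct calculation yields two facts: (i) for $\Delta_k=\sum_{\sigma\in S_n}\mathrm{sgn}(\sigma)\prod_i u_{k+\sigma(i)-1,\,i}$, setting $\tau(i)=\sigma(i)-i$, the weight of the $\sigma$-summand is $n\,h(k-1)-\sum_i \tau(i)^2$ (using $\sum_i\tau(i)=0$), uniquely maximized at $\sigma=\mathrm{id}$ with weight-leading monomial the diagonal $\prod_i u_{k+i-1,\,i}$; (ii) the weight of the $j$-th summand of $u_{i,1}x_1+\cdots+u_{i,n}x_n$ is $-(i-j+1)^2+i+1$, uniquely maximized at $j=i+1$ with weight-leading monomial $u_{i,i+1}x_{i+1}$. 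Since each factor of $f$ has a unique weight-leading monomial with coefficient $\pm 1$, the weight-leading monomial of $f$ is the product of these leading monomials, namely $M$, and it appears in $f$ with coefficient $1$.

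The main obstacle is designing the right weight. A naive lexicographic order placing $u$'s above $x$'s makes $u_{i,1}x_1$ the leading term of $u_{i,1}x_1+\cdots+u_{i,n}x_n$, which collides both with the $x_1$ out front and with the diagonal entry $u_{i,1}$ of $\Delta_i$, producing a variable with exponent $\geq p$. The function $h(t)=-t(t+1)$ is symmetric about $t=-\tfrac12$ with equal maxima at $t=0$ and $t=-1$, so it strongly penalizes off-diagonal entries of $U$ (forcing the main diagonal to dominate in each $\Delta_k$) while making the main-diagonal entry $u_{i,i}$ and the super-diagonal entry $u_{i,i+1}$ equally attractive within row $i$ of $U$. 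The linear weight $w(x_j)=j$ then tilts the remaining tie in favor of $j=i+1$, steering the leading term of the $i$-th linear factor into the previously-unused super-diagonal variable $u_{i,i+1}x_{i+1}$, exactly as required to land on $M$.
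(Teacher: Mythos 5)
Your proof is correct and follows essentially the same strategy as the paper: both single out the monomial $x_1(x_2\cdots x_n)^{p-1}\prod_{-1\le a-b\le s-n}u_{a,b}^{p-1}$ and certify it as a term of $f$ (hence $f\notin\mathfrak{n}^{[p]}$) by making it the leading term of each factor under a suitable order. The only difference is the order used to break the tie in the linear forms: the paper designs a lexicographic order that artificially prioritizes $u_{i,i+1}$ within row $i$, while you achieve the same leading terms with the quadratic weight $-(a-b)(a-b+1)$ plus the tilt $w(x_j)=j$.
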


\begin{proof}
    Without loss of generality, we let
    \[\Delta_{i} = \det \begin{bmatrix}
        u_{i,1} & \hdots & u_{i,n} \\
        \vdots & & \vdots \\
        u_{i+n-1,1} & \hdots & u_{i+n-1,n}
    \end{bmatrix}.\]

    Recall that $\mathfrak{n}^{[p]}$ is a monomial ideal. Thus, for polynomials $f \in S$, $f \in \mathfrak{n}^{[p]}$ if and only if every monomial term of $f$ is in $n^{[p]}$. So it is enough to show that \[\initial_<\big(x_1(\Delta_1\dots \Delta_{s-n+1})^{p-1}\prod_{i=1}^{n-1}(u_{i,1}x_1+\dots+u_{i,n}x_n)^{p-1}\big) \notin \mathfrak{n}^{[p]}\] for some monomial ordering.
    
    Use the lexicographical order induced from the following variable order:
    
    For $u_{i,j} \neq u_{l,k}$, set
    \begin{eqnarray*}
        u_{i,j} > u_{l,k} & \text{ if }  & \begin{cases}
            i>l \\
            i = l \text{ and } \begin{cases}
                j = i+1 \\
                j>k \text{ and } k \neq i+1.
            \end{cases}
        \end{cases}
    \end{eqnarray*}
    Furthermore, let $u_{i,j} > x_k$ for all $i,j,$ and $k$ and fix an arbitrary order on the $x_i$. For a polynomial $f$, let $\initial_<(f)$ represent the initial monomial of $f$ with respect to this order. By expanding determinants along the bottom row, one can see that \[\initial_<(\Delta_{i}) = u_{i,1}u_{i+1,2}\dots u_{i+n-1,n},\] which is just the product along the main diagonal. Furthermore,
    \[\initial_<(u_{i,1}x_1+\dots+u_{i,n}x_n) = u_{i,i+1}x_{i+1}.\]
    
     Recall that, for arbitrary polynomials $f$ and $g$, $\initial_<(fg) =\initial_<(f)\initial_<(g)$. Thus

\begin{eqnarray*}
    &&\initial_<\big(x_1(\Delta_1\dots \Delta_{s-n+1})^{p-1}\prod_{i=1}^{n-1}(u_{i,1}x_1+\dots+u_{i,n}x_n)^{p-1}\big)
    \\ &=& x_1\big(\initial_<(\Delta_1\dots \Delta_{s-n+1})\big)^{p-1}\prod_{i=1}^{n-1}\big(\initial_<(u_{i,1}x_1+\dots+u_{i,n}x_n)\big)^{p-1}
    \\ &=&x_1\Big(\prod_{0 \leq i-j \leq s-n} u_{i,j}^{p-1}\Big)(x_2\dots x_n u_{1,2}u_{2,3}\dots u_{n-1,n})^{p-1} \\ &=& x_1(x_2\dots x_n)^{p-1}\Big(\prod_{-1 \leq i-j \leq s-n} u_{i,j}^{p-1}\Big).
\end{eqnarray*}

The monomial $x_1(x_2\dots x_n)^{p-1}\Big(\prod_{-1 \leq i-j \leq s-n} u_{i,j}^{p-1}\Big)$ is clearly not contained in $\mathfrak{n}^{[p]}$, so we are done. 
\end{proof}

We are now ready to prove Theorem \ref{theoremMaximalSFR}.

\begin{proof}[Proof of Theorem \ref{theoremMaximalSFR}]

    It suffices to prove assertion $(2)$ of the theorem; since $F$-regular rings are $F$-rational, it then follows that $R/I$ is of $F$-rational type for $K$ of characteristic zero, and then by \cite[Theorem 4.3]{Karen} that $R/I$ has rational singularities.

    By Lemma \ref{lemResIntEquiv} and Lemma \ref{lemmaAnyGensOkay}, we may fix $S$, $\mathfrak{a}$, and $J$ to be as in Remark \ref{remarkMaximalFixGens}. Let $\mathfrak{n}$ be the homogeneous maximal ideal of $S$.

    In the case where $n=1$, the ring $S/J = K[x_1][u_{1,1},\dots,u_{s,1}]/(u_{1,1},\dots,u_{s,1}) \cong K[x_1]$ is regular, and so we may assume that $n > 1$. 

    We apply Glassbrenner's criterion (Theorem \ref{theoremGlassbrenner}) with $s=x_1$. The ideal $J$ is prime by \cite[Example 3.4]{HU4}, so $x_1 \notin J$. Localize $S/J$ at $x_1$, and notice that \[JS_{x_1} = \mathfrak{a}S_{x_1}:(x_1,\dots,x_n)S_{x_1} = \mathfrak{a}S_{x_1}: S_{x_1} = \mathfrak{a}S_{x_1}\] and that $\mathfrak{a}S_{x_1}$ is generated by the set $\{u_{i,1}+\sum_{j=2}^n u_{i,j}\frac{x_j}{x_1} \; \vert \; 1 \leq i \leq s\}$. Thus, \[(S/J)_{x_1} \cong k[x_1,\dots,x_n][u_{i,j} \;\vert \; 1 \leq i \leq s, 2 \leq j \leq n][x_1^{-1}]\] is a regular ring. 
    
    Next, we must show that \[x_1(J^{[p]}:J) \not\subseteq \mathfrak{n}^{[p]}.\]
    
    Crucially, by Corollary \ref{corMain}, it is enough to show that $x_1(\mathfrak{b}^{[p]}:\mathfrak{b}) \not\subseteq \mathfrak{n}^{[p]}$ for some ideal $\mathfrak{b}$ contained in $J$ and generated by a regular sequence of length $\height(J)$. We choose $\mathfrak{b}$ to be generated by the sequence $\underline{\beta}$ as in Lemma \ref{lemmaRegularSecMax}. We are now done by Lemma \ref{lemmaNotInMax}.
    \end{proof}
    
We use the flat ascent of $F$-regularity to finish our proof. 

\begin{thm}\cite[Theorem 3.6]{Ab} \label{TheoremFlatAscent}
Let $(S,\mathfrak{n}) \longrightarrow (R,\mathfrak{m})$ be a faithfully flat extension of graded $F$-finite rings $S$ and $R$ of characteristic $p>0$ such that $S$ is strongly $F$-regular and the closed fiber $R/\mathfrak{n}R$ is Gorenstein and strongly $F$-regular. Then, $R$ is strongly $F$-regular.
\end{thm}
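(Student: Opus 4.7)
The plan is to verify the single-element splitting characterization of strong $F$-regularity for $R$: it suffices to produce some $c \in R^\circ$ for which $R_c$ is strongly $F$-regular together with an integer $e$ such that the $R$-linear map $R \to R^{1/p^e}$ sending $1 \mapsto c^{1/p^e}$ splits. Since $R$ and $S$ are both $F$-finite and graded, strong $F$-regularity can be tested at the unique homogeneous maximal ideal, and I would work in that local setting throughout.

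First I would choose a good element $c$. Because $S$ is $F$-finite and strongly $F$-regular, its regular locus is nonempty and open, so I can pick $c \in S^\circ$ with $S_c$ regular. By faithful flatness, $c$ remains in $R^\circ$, and the localization $R_c = R \otimes_S S_c$ is flat over the regular ring $S_c$ with strongly $F$-regular (and Gorenstein) closed fibers. The case of flat ascent of strong $F$-regularity over a regular base is considerably easier, since Frobenius on the base is flat by Kunz's theorem, and yields that $R_c$ is itself strongly $F$-regular; thus $c$ is a legitimate witness for the splitting criterion.

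The crux is then to produce a splitting of $R \to R^{1/p^e}$, $1 \mapsto c^{1/p^e}$, for some $e$. Here the Gorenstein hypothesis on the fiber is indispensable, and I would exploit it through canonical modules: under a flat local map with Gorenstein closed fiber, canonical modules commute with base change, so $\omega_R \cong \omega_S \otimes_S R$ up to the shift encoding the fiber dimension. Consequently the Frobenius trace $\omega_R^{1/p^e} \to \omega_R$ factors compatibly through the corresponding trace on $\omega_S$, and the existence of the desired splitting can be reformulated, via the $\omega$-dual description of strong $F$-regularity, as the existence of an analogous splitting over $S$ --- granted by hypothesis --- together with a compatibility condition supplied by the strong $F$-regularity of the Gorenstein fiber.

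I expect the main obstacle to lie precisely in this lifting step. Without the Gorenstein hypothesis on $R/\mathfrak{n}R$, the base change formula for $\omega_R$ fails and the Frobenius on $\omega_R$ cannot be controlled through $\omega_S$; the Gorenstein condition is exactly what makes the dualizing module of the fiber free of rank one, so that splittings can be transferred cleanly. Producing a single exponent $e$ that witnesses both the splitting over $S$ and the splitting in the fiber simultaneously is the most delicate part, and would likely require passing to faithfully flat completions, invoking Matlis duality, and then reassembling the splitting over the original graded ring.
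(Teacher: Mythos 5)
You should first note that the paper does not actually prove this statement: it is quoted verbatim from \cite[Theorem 3.6]{Ab}, and the only argument the authors supply is the reduction of the graded version to Aberbach's local version by localizing at the homogeneous maximal ideals (faithful flatness, Gorensteinness of the fiber, and strong $F$-regularity are all checked locally). You have instead attempted a proof from scratch, and the attempt has two genuine gaps.

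The first gap is the claim that $R_c$ is strongly $F$-regular once $c\in S^\circ$ is chosen with $S_c$ regular. The ring $R_c$ is indeed flat over $S_c$, but the fibers of $S_c\to R_c$ over the maximal ideals of $S_c$ are fibers of $S\to R$ over \emph{non-maximal} primes $\mathfrak{p}\subseteq S$ with $c\notin\mathfrak{p}$; the hypotheses say nothing about these fibers --- only about the closed fiber $R/\mathfrak{n}R$. Even over a regular base, flat ascent of strong $F$-regularity requires control of \emph{all} fibers (geometric $F$-regularity of the fibers, as in the smooth base change results of \cite{HH3}); it does not follow from regularity of the base together with niceness of the single closed fiber. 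So you have not actually produced a legitimate witness $c$ for the splitting criterion. The second gap is the central lifting step. The isomorphism $\omega_R\cong\omega_S\otimes_SR$ for a flat local map with Gorenstein Cohen--Macaulay closed fiber is correct, but Frobenius and its trace do not commute with non-smooth base change, so the assertion that the trace on $\omega_R$ ``factors compatibly'' through the trace on $\omega_S$, and that a splitting over $S$ plus an unspecified ``compatibility condition'' from the fiber assembles into a splitting over $R$, is precisely the content of the theorem and is given no argument; you acknowledge as much. For the record, Aberbach's proof does not proceed this way: it works with tight closure of ideals of the form $IR+(y_1^t,\dots,y_d^t)$, where $y_1,\dots,y_d$ lift a system of parameters of the Gorenstein closed fiber, and uses the Gorenstein hypothesis to control socle elements. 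As written, your proposal identifies the relevant hypotheses but closes neither of the two essential steps, so it cannot be accepted as a proof; for the purposes of this paper the correct move is the one the authors make, namely to cite \cite[Theorem 3.6]{Ab} and record the graded-to-local reduction.
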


The above statement is proved in \cite{Ab} for the case where the rings $R$ and $S$ are local, though we use the graded version which immediately follows from applying the local statement to the closed fiber owing to the fact that the properties if being faithfully flat, Gorenstein and strongly $F$-regular are all local conditions and that they be checked by localizing at each maximal ideal. See \cite[Theorem 7.5, Remark 7.14]{MP} for details. 

\begin{thm}\label{theoremGenCU}
Let $R = K[x_1, \ldots, x_n]$ be a polynomial ring over a field $K$ and $I$ be an ideal of $R$ generated by a regular sequence of homogeneous elements. Then,
     \begin{enumerate}
         \item If $K$ has characteristic zero and $R/I$ has rational singularities, then the generic residual intersections of $R/I$ also have rational singularities.
         
         \item If $K$ is an $F$-finite field of positive characteristic and $R/I$ is $F$-rational, then the generic residual intersections of $R/I$ are strongly $F$-regular.
    \end{enumerate}
\end{thm}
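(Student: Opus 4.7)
The plan is to reduce the general case to the maximal-ideal case (Theorem~\ref{theoremMaximalSFR}) via the flat ascent of strong $F$-regularity (Theorem~\ref{TheoremFlatAscent}). Since a strongly $F$-regular ring is $F$-rational, part $(1)$ will follow from part $(2)$ by standard reduction to positive characteristic together with the passage from $F$-rational type to rational singularities via \cite[Theorem 4.3]{Karen}; so I concentrate on part $(2)$.

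Write $I = (f_1, \ldots, f_g)$ and let $U = (u_{i,j})$ be the $s \times g$ matrix of indeterminates used to form the generic residual intersection $S/J$ of $R/I$, so that $\mathfrak{a} := (U[f_1 \cdots f_g]^T)$ and $J = \mathfrak{a}:IR[U]$. Introduce the auxiliary graded polynomial ring $P = K[y_1, \ldots, y_g]$ with $\deg y_i = \deg f_i$ and maximal ideal $\mathfrak{m}_P = (y_1, \ldots, y_g)$; set $\mathfrak{a}' := (U[y_1 \cdots y_g]^T) \subseteq P[U]$ and $J' := \mathfrak{a}' : \mathfrak{m}_P P[U]$. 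By Theorem~\ref{theoremMaximalSFR}, $C := P[U]/J'$ is strongly $F$-regular. The graded $K$-algebra map $\varphi\colon P \to R$ sending $y_i \mapsto f_i$ makes $R$ a graded $P$-module; since $f_1, \ldots, f_g$ is a regular sequence on $R$, the Koszul homology of $R$ against $P/\mathfrak{m}_P$ vanishes in positive degree, and the graded local criterion for flatness then shows that $R$ is a free (in particular faithfully flat) $P$-module.

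Faithful flatness is preserved by base change, so the structure map $C \to C \otimes_P R$ is faithfully flat. To identify this base change with $S/J$, I use that $P[U] \to R[U]$ is flat (base-changed from $\varphi$) and that $\mathfrak{m}_P P[U]$ is finitely generated; together these imply that formation of the colon $J' = \mathfrak{a}' : \mathfrak{m}_P P[U]$ commutes with extension along $\varphi$, giving
\[
J' R[U] \;=\; \bigl(\mathfrak{a}' R[U]\bigr) : \bigl(\mathfrak{m}_P R[U]\bigr) \;=\; \mathfrak{a} : IR[U] \;=\; J.
\]
Hence $C \otimes_P R \cong S/J$, and the closed fiber of the faithfully flat map $C \to S/J$ over the homogeneous maximal ideal $\mathfrak{m}_C$ of $C$ is $R \otimes_P (P/\mathfrak{m}_P) = R/I$, which is Gorenstein (as $I$ is a complete intersection) and, by the $F$-rationality hypothesis together with Theorem~\ref{TheoremF-singularities}(3), strongly $F$-regular. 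Applying Theorem~\ref{TheoremFlatAscent} to $C \to S/J$ then delivers strong $F$-regularity of $S/J$.

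The step I expect to be most delicate is the identification $J = J' R[U]$. It rests on two technical inputs: the flatness of $\varphi\colon P \to R$, which relies crucially on the regular-sequence hypothesis in the graded setting (note that $R$ is not finitely generated over $P$ once $n>g$, so the argument must be run through the graded Koszul/Nakayama machinery rather than through miracle flatness for finite modules), and the fact that colon ideals commute with flat base change when the denominator ideal is finitely generated. Once this identification is secured, the closed-fiber computation and the invocation of Theorem~\ref{TheoremFlatAscent} are routine.
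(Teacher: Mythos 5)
Your proposal is correct and follows essentially the same route as the paper: base change the generic residual intersection of the homogeneous maximal ideal of $K[y_1,\ldots,y_g]$ along the faithfully flat map $y_i \mapsto f_i$, identify the result with the generic residual intersection of $I$ using that colon ideals commute with flat base change, and conclude by flat ascent of strong $F$-regularity with closed fiber $R/I$. The only difference is that you spell out the flatness of $K[y_1,\ldots,y_g] \to R$ and the colon-commutation step in more detail than the paper does.
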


\begin{proof}
It suffices to prove assertion $(2)$ of the theorem; since $F$-regular rings are $F$-rational, it then follows that $R/I$ is of $F$-rational type for $K$ of characteristic zero, and then by \cite[Theorem 4.3]{Karen} that $R/I$ has rational singularities.

Let $I = (f_1, \ldots, f_t)$ and $S$ be the polynomial ring $K[y_1,\ldots, y_t]$. Consider the homomorphism from $S$ to $R$ which maps $y_i$ to $f_i$ for $1 \leq i \leq t$. This map has closed fiber $R/I$ and it is faithfully flat since the ideal $I$ is generated by a regular sequence. This map induces, via a base change, a faithfully flat map from the generic residual intersection of the homogeneous maximal ideal of $S$ to the generic residual intersection of the ideal $I$ of $R$ since colon ideals commute with flat maps. Note that the generic residual intersection of $I$ is quasi-homogeneous by \cite[Example 3.4]{HU4} (and it is in fact standard graded if each $f_i$ is homogeneous of the same degree). This induced map on the generic residual intersections is thus a faithfully flat map of graded rings with closed fiber $R/I$. The closed fiber is a complete intersection $F$-rational ring by hypothesis and hence strongly $F$-regular by Theorem \ref{TheoremF-singularities} (5). Next, the generic residual intersection of $S$ is a (graded) strongly $F$-regular by Theorem \ref{theoremMaximalSFR}. The result now follows from Theorem \ref{TheoremFlatAscent}.
\end{proof}

\section{\texorpdfstring{$F$}{F}-Regularity of the Generic Links of Determinantal Rings} \label{Sec:LinkageSingularity}

Our final goal is to prove that the generic link of the generic determinantal ideal $I_t(X)$ of maximal minors is $F$-regular. Our strategy is to prove this by induction on $t$.

Note that the $t=1$ case is exactly the statement of Theorem \ref{theoremMaximalSFR} for generic links (that is, for $s=n$). In this section, we switch from the generic residual intersection of the $t=1$ case (which we dealt with in 
\S\ref{Sec:ResIntSingularity}) to the generic link of the $t \geq 2$ case. This is because the generic residual intersections of $I_t(X)$ for $t \geq 2$ are \emph{not} Cohen-Macaulay unless they are also links; see \cite[Theorem 1.1]{EU}. Thus, for example, they may never be $F$-regular.  

\begin{rem}\label{remarkGenricFixGens}
By Lemma \ref{lemResIntEquiv} and Lemma \ref{lemmaAnyGensOkay}, $F$-regularity and all other relevant properties are independent of the choice of a generating set of $\mathfrak{m}$. So, for the rest of this section, we fix the generators of $I_t(X)$ to be the maximal minors of $X$, $\Delta_{1},\dots,\Delta_r$, where $r = \binom{n}{t}$. 

Let $U=(u_{i,j})$ be an $(n-t+1) \times r$ matrix of indeterminates and $S = R[U]$ and let $\mathfrak{a} \subseteq IS$ be the ideal generated by the entries of $U[\Delta_{1} \dots \Delta_r]^T$. We fix $J = \mathfrak{a}:IS$ to be our choice of a generic link. 
\end{rem}

\begin{lem}\label{lemmaNotInGeneric}
    Let $\mathfrak{a}$ and $R[U]$ be defined as in Remark \ref{remarkGenricFixGens}, and let $\mathfrak{m}$ be the homogeneous maximal ideal of $R[U]$, then $x_{1,n}(\mathfrak{a}^{[p]}:\mathfrak{a}) \not\subseteq \mathfrak{m}^{[p]}$.  
\end{lem}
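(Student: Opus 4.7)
The plan is to reduce the claim to Lemma \ref{lemmaNotInDet} by picking out the coefficient of a carefully chosen $u$-monomial in $x_{1,n}(\alpha_1\cdots\alpha_{n-t+1})^{p-1}$, where $\alpha_1,\dots,\alpha_{n-t+1}$ denote the entries of $U[\Delta_1\ \dots\ \Delta_r]^T$. First, I would observe that $\alpha_1,\dots,\alpha_{n-t+1}$ is a regular sequence in $R[U]$: the ideal $\mathfrak{a}$ they generate sits inside $I_t(X)R[U]$ of height $n-t+1$, is generated by $n-t+1$ elements, and $R[U]$ is Cohen-Macaulay, forcing $\height(\mathfrak{a}) = n-t+1$ and hence regularity of the sequence. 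Exactly as in the proof of Proposition \ref{propFedderCI}, this yields
$$\mathfrak{a}^{[p]}:\mathfrak{a} \;=\; \bigl((\alpha_1\cdots\alpha_{n-t+1})^{p-1}\bigr) + \mathfrak{a}^{[p]},$$
which reduces the problem to showing that $x_{1,n}(\alpha_1\cdots\alpha_{n-t+1})^{p-1} \notin \mathfrak{m}^{[p]}$. (Throughout I assume $t \geq 2$, since the $t=1$ case is already covered by Theorem \ref{theoremMaximalSFR}, as remarked at the start of this section.)

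Next, for each $i \in \{1,\dots,n-t+1\}$, let $\Delta_i' = [i,t+i-1]$ be the adjacent maximal minor from Lemma \ref{lemmaRegularSecDet}, and let $k_i$ be its index among $\Delta_1,\dots,\Delta_r$. In $\alpha_i^{p-1} = \bigl(\sum_{j=1}^r u_{i,j}\Delta_j\bigr)^{p-1}$, the multinomial theorem produces $(\Delta_i')^{p-1}$ as the coefficient of the pure power $u_{i,k_i}^{p-1}$. Because the $u$-variables appearing in distinct $\alpha_i$'s have disjoint row indices, the $u$-monomial $\prod_{i=1}^{n-t+1} u_{i,k_i}^{p-1}$ arises in the expansion of $(\alpha_1\cdots\alpha_{n-t+1})^{p-1}$ in a unique way, with $x$-coefficient $\prod_{i=1}^{n-t+1} (\Delta_i')^{p-1}$.

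To conclude, since $\mathfrak{m}^{[p]}$ is a monomial ideal and no other summand of $x_{1,n}(\alpha_1\cdots\alpha_{n-t+1})^{p-1}$ contributes to the $u$-monomial $\prod_i u_{i,k_i}^{p-1}$, it suffices to find an $x$-monomial in $x_{1,n}\prod_i (\Delta_i')^{p-1}$ all of whose exponents are strictly less than $p$; the $u$-exponents $p-1$ are automatically less than $p$. This is precisely the content of Lemma \ref{lemmaNotInDet} applied to the polynomial ring $R = K[X]$. There is no genuine obstacle beyond identifying the right $u$-monomial: the choice $\prod_i u_{i,k_i}^{p-1}$ extracts exactly the adjacent maximal minors $\Delta_1',\dots,\Delta_{n-t+1}'$ that already formed the regular sequence used in the determinantal case, so the argument is essentially a lift of Lemma \ref{lemmaNotInDet} across the passage from $I_t(X)$ to the generic link ideal $\mathfrak{a}$.
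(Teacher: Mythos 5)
Your proof is correct and is essentially the paper's argument: both isolate the single term obtained by choosing $u_{i,k_i}\Delta_{k_i}$ from each $a_i$, so that the witness monomial is $x_{1,n}\bigl(\prod_i u_{i,k_i}^{p-1}\bigr)\bigl(\prod_{0\le j-i\le n-t}x_{i,j}^{p-1}\bigr)$; the paper does this by computing the initial form under a lex order that ranks the $u_{i,k_i}$ highest, whereas you extract the coefficient of $\prod_i u_{i,k_i}^{p-1}$ (using disjointness of the $u$-rows) and then quote Lemma \ref{lemmaNotInDet} as a black box, which is a slightly more modular packaging of the same computation. One small caveat: your justification that $\height(\mathfrak{a})=n-t+1$ is backwards --- containment in $I_t(X)R[U]$ and having $n-t+1$ generators only bound the height from \emph{above} --- but this step (and the full Fedder-type equality from Proposition \ref{propFedderCI}) is superfluous, since the only containment you use, $(\alpha_1\cdots\alpha_{n-t+1})^{p-1}\in\mathfrak{a}^{[p]}:\mathfrak{a}$, holds for an arbitrary generating set because $(\alpha_1\cdots\alpha_{n-t+1})^{p-1}\alpha_j=\alpha_j^{p}\prod_{i\ne j}\alpha_i^{p-1}\in\mathfrak{a}^{[p]}$.
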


\begin{proof}
    Let $a_i = u_{i,1}\Delta_1 + \dots u_{i,r}\Delta_r$ for $1 \leq i \leq n-t+1$, and let $k_1, \dots, k_{n-t+1}$ be the indices such that
    $\Delta_{k_i} = [i,t+i-1]$. Notice that $x_{1,n}(a_1\dots a_{n-t+1})^{p-1} \in  x_{1,n}(\mathfrak{a}^{[p]}:\mathfrak{a})$. 
    
    Since $\mathfrak{m}^{[p]}$ is a monomial ideal, a polynomial $f \in \mathfrak{m}^{[p]}$ if and only if every monomial term of $f$ is in $m^{[p]}$. So it is enough to show that for some monomial ordering $<$, we have \[\initial_<\big(x_{1,n}(a_1\dots a_{n-t+1})^{p-1}\big) \notin m^{[p]}.\] 
    
    Use the lexicographical order induced from the variable order \[u_{1,k_1} > u_{2,k_2} > \dots > u_{n-t+1,k_{n-t+1}} > \text{ the remaining } u_{i,j} >\] \[x_{1,1} > x_{1,2} > \dots > x_{1,n} > x_{2,1} > \dots > x_{2,n} > \dots > x_{t,n}.\] For a polynomial $f$, let $\initial_<(f)$ represent the initial monomial of $f$ with respect to this order. By expanding the determinant along the first row, we get $\initial_<([i,t+i-1]) = x_{1,i}x_{2,i+1}\dots x_{t,t+i-1}$, which is just the product along the main diagonal of $[i,t+i-1]$. Thus,
     \begin{eqnarray*}
        \initial_<\big(x_{1,n}(a_1 \dots a_{n-t+1})^{p-1}) &=& x_{1,n}(\initial_<(a_1)\dots \initial_<\big(a_{n-t+1})\big)^{p-1}\\
        &=& x_{1,n}(u_{1,k_1}\initial_<([1,t])\dots u_{n-t+1,k_{n-t+1}}\initial_<([n-t+1,n])\big)^{p-1}\\
        &=& x_{1,n}\Bigg(\prod_{i = 1}^{n-t+1}u_{i,k_{i}}^{p-1}\Bigg)\Bigg(\prod_{0 \leq j-i \leq n-t} x_{i,j}^{p-1}\Bigg).
    \end{eqnarray*}

    We have $\initial_<\big(x_{1,n}(a_1\dots a_{n-t+1})^{p-1}\big) \notin m^{[p]}$, so we are done.
\end{proof}

We are now ready to prove the main result of this paper.

\begin{thm}\label{theoremGenericSFR}
    Let $X = (x_{i,j})$ be a $t \times n$ matrix of indeterminates for $n \geq t$, $K$ a field, and $R = K[X]$. Let $I_t(X)$ denote the ideal of $R$ generated by the size $t$ minors of $X$. Then, 
     \begin{enumerate}
         \item If $K$ has characteristic zero, the generic link of $R/I_t(X)$ have rational singularities.
         
         \item If $K$ is an $F$-finite field of positive characteristic, the generic link of $R/I_t(X)$ is strongly $F$-regular.
     \end{enumerate}
\end{thm}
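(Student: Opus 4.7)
The plan is to prove statement (2); the characteristic zero statement (1) will then follow from the standard reduction mod $p$ machinery together with \cite[Theorem 4.3]{Karen}, exactly as in the proof of Theorem \ref{theoremGenCU}. The argument proceeds by induction on $t$. For the base case $t=1$, the ideal $I_1(X) = (x_{1,1},\ldots,x_{1,n})$ is the homogeneous maximal ideal of $R = K[X]$, and its generic link is precisely a generic $n$-residual intersection of this maximal ideal; Theorem \ref{theoremMaximalSFR} then applies directly.

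For the inductive step $t \geq 2$, I fix the setup of Remark \ref{remarkGenricFixGens}: $S = R[U]$, the ideal $\mathfrak{a} \subseteq I_t(X)S$ generated by the entries of $U [\Delta_1 \, \cdots \, \Delta_r]^T$, and $J = \mathfrak{a} : I_t(X)S$. By Proposition \ref{propLinksPrime}, $J$ is prime (since $I_t(X)$ is prime and generically a complete intersection). The plan is to apply Glassbrenner's criterion (Theorem \ref{theoremGlassbrenner}) to $J$ in $S$ using the element $x_{1,n}$, which requires verifying two conditions.

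For the first condition, that $(S/J)_{x_{1,n}}$ is strongly $F$-regular, Lemma \ref{lemmaInvertx11} with $x_{1,n}$ as the pivot yields $R_{x_{1,n}} \cong K[Y][x_{1,1},\ldots,x_{1,n},x_{2,n},\ldots,x_{t,n}][x_{1,n}^{-1}]$, where $Y$ is a $(t-1)\times(n-1)$ matrix of indeterminates, under which $I_t(X)_{x_{1,n}} = I_{t-1}(Y)_{x_{1,n}}$. Expressing each $\Delta_j$ in terms of the maximal minors $\Delta'_{j'}$ of $Y$, one computes $a_i = \sum_j u_{i,j} \Delta_j = \sum_{j'} c_{i,j'} \Delta'_{j'}$, where each $c_{i,j'}$ has the form $x_{1,n} u_{i, k(j')} + (\text{terms in other } u_{i,j})$ and $k(j')$ indexes the $X$-minor obtained from $\Delta'_{j'}$ by including column $n$. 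Invertibility of $x_{1,n}$ permits replacing each $u_{i, k(j')}$ by $c_{i,j'}$ as a generator of $S_{x_{1,n}}$ over $R_{x_{1,n}}$; in these new coordinates, $\mathfrak{a}_{x_{1,n}}$ is generated by $n-t+1$ generic combinations of the maximal minors of $Y$ (in the indeterminates $c_{i,j'}$). Consequently $(S/J)_{x_{1,n}}$ is, up to adjoining polynomial variables and inverting $x_{1,n}$, the generic link of $I_{t-1}(Y)$, which is strongly $F$-regular by induction. For the second condition, since $\mathfrak{a}$ is generated by a regular sequence of length $n-t+1 = \height(J)$ properly contained in $J$, Corollary \ref{corMain} gives $\mathfrak{a}^{[p]} : \mathfrak{a} \subseteq J^{[p]} : J$, and Lemma \ref{lemmaNotInGeneric} then yields $x_{1,n}(J^{[p]}:J) \not\subseteq \mathfrak{n}^{[p]}$, where $\mathfrak{n}$ is the homogeneous maximal ideal of $S$.

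The main obstacle is the change-of-variables argument in the localization step: one must verify that, for each fixed $i$, replacing the $\binom{n-1}{t-1}$ indeterminates $\{u_{i,k(j')}\}_{j'}$ by $\{c_{i,j'}\}_{j'}$ yields an isomorphism of polynomial rings over $R_{x_{1,n}}$. The key observation is that $x_{1,n}$ is the coefficient of $u_{i, k(j')}$ in $c_{i,j'}$ and is a unit in $R_{x_{1,n}}$, making the change of variables triangular and hence invertible. Once this is in hand, $\mathfrak{a}_{x_{1,n}}$ is identified with an ideal of generic combinations of the minors of $Y$, and the induction hypothesis closes the argument.
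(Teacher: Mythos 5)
Your proposal is correct and follows essentially the same route as the paper: induction on $t$ with Theorem \ref{theoremMaximalSFR} as the base case, Glassbrenner's criterion with $s = x_{1,n}$, the triangular change of variables after inverting $x_{1,n}$ to identify the localization with a polynomial extension of the generic link of $I_{t-1}(Y)$, and Lemma \ref{lemmaNotInGeneric} for the colon-ideal condition. The only cosmetic differences are your normalization of the new $u$-coordinates (not dividing by $x_{1,n}$) and your appeal to Corollary \ref{corMain} where the paper invokes Lemma \ref{lemmaMain} directly from $J = \mathfrak{a}:I_t(X)S$; both are valid.
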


\begin{proof}
    It suffices to prove assertion $(2)$ of the theorem; since $F$-regular rings are $F$-rational, it then follows that $R/I$ is of $F$-rational type for $K$ of characteristic zero, and then by \cite[Theorem 4.3]{Karen} that $R/I$ has rational singularities.
    
    We proceed by induction on $t$. The case $t = 1$ follows directly from Theorem \ref{theoremMaximalSFR}. Now we assume the statement for $t-1 \geq 1$ and show that it is true for $t$. To do this, we apply Glassbrenner's criterion (Theorem \ref{theoremGlassbrenner}) with $s = x_{1,n}$. Note that $J$ is prime by Proposition \ref{propLinksPrime}, so it follows that $x_{1,n} \notin J$.

    In what follows, we prove rigorously that the localization of the generic link of the ring defined by the ideal $I_t(X_{t \times n})$ at $x_{1,n}$ gives a smooth extension of the generic link of the ring defined by $I_{t-1}(X_{(t-1) \times (n-1)})$. This checks condition $(1)$ of Glassbrenner's criterion.
    
    Let $r' = \binom{n-1}{t-1}$. Let $J$ be defined as in Remark \ref{remarkGenricFixGens}. Without loss of generality, assume that $\Delta_{1}, \dots, \Delta_{r'}$ are the maximal minors of $X$ that contain the last column. 

    Set $S = R[U]$ and localize $S/J$ at $s = x_{1,n}$. Let $\Delta_{j}' = x_{1,n}^{-1}\Delta_{j}$ for $1 \leq j \leq r'$. Notice that $\Delta_1',\dots, \Delta_{r'}'$ are the maximal minors of the $(t-1) \times (n-1)$ matrix $X'$ with entries $\{x_{i,j}' \;\vert\; 2 \leq i \leq t; 1 \leq j \leq n-1 \}$ where $x_{i,j}' = x_{i,j} -  x_{1,j} x_{i,n} x_{1,n}^{-1}$. Furthermore, for $$\{i_1,\dots,i_t\} \subseteq \{1,\dots,n-1\},$$ we have \[\det \begin{bmatrix}
        x_{1,i_1} &\dots& x_{1,i_t}\\
        x_{2,i_1}'&\dots& x_{2,i_t}'\\
        \vdots &  & \vdots\\
        x_{t,i_1}'&\dots& x_{t,i_t}'
    \end{bmatrix} = \det \begin{bmatrix}
        x_{1,i_1} &\dots& x_{1,i_t}\\
        x_{2,i_1}&\dots& x_{2,i_t}\\
        \vdots &  & \vdots\\
        x_{t,i_t}&\dots& x_{t,i_t}
    \end{bmatrix}, \] since determinants are invariant under row operations.

    Thus, for $l > r'$, $\Delta_{l}$ can be written as $\sum_{k=1}^{r'}f_{l,k}\Delta_k'$ with $f_{l,k} \in \{x_{1,j} \; \vert \; 1 \leq j < n\}$.

    Notice that $a_i = \sum_{j=1}^{r} u_{i,j}\Delta_{j}$ for $1 \leq i \leq n-t+1$ is a generating set of $\mathfrak{a}$, which gives us that $a_i' = x_{1,n}^{-1}a_i$ for $1 \leq i \leq n-t+1$ is a generating set of $\mathfrak{a}S_{x_{1,n}}$. Now we have that: 

    \begin{eqnarray*}
        a_i' &=& x_{1,n}^{-1}\sum_{j=1}^{r} u_{i,j}\Delta_{j}\\
             &=& \sum_{j=1}^{r'} u_{i,j}\Delta_{j}'+\sum_{l=r'+1}^r x_{1,n}^{-1}u_{i,l}\Delta_{l}\\
             &=& \sum_{j=1}^{r'} u_{i,j}\Delta_{j}'+\sum_{l=r'+1}^r x_{1,n}^{-1}u_{i,l}\sum_{j=1}^{r'}f_{l,j}\Delta_j'\\
             &=& \sum_{j=1}^{r'} u_{i,j}\Delta_{j}'+\sum_{j=1}^{r'}\Bigg(\sum_{l=r'+1}^r x_{1,n}^{-1}u_{i,l}f_{l,j}\Bigg)\Delta_j'\\
             &=& \sum_{j=1}^{r'} \big(u_{i,j} + \sum_{l=r'+1}^r x_{1,n}^{-1}u_{i,l}f_{l,j}\big)\Delta_j' \\
             &=& \sum_{j=1}^{r'} u_{i,j}'\Delta_{j}',
    \end{eqnarray*}

    where \[u_{i,j}' = u_{i,j} + \sum_{l=r'+1}^r x_{1,n}^{-1}u_{i,l}f_{l,j} \quad \text{for} \quad 1 \leq j\leq r'.\] Notice that, for $1\leq i\leq n+t-1$ and $1 \leq j\leq r'$, $u_{i,j}$ is not a factor of any monomial term of \[\sum_{l=r'+1}^r x_{1,n}^{-1}u_{i,l}f_{l,j},\] so the set \[\{u_{i,j}' \; \vert \; 1\leq i\leq n+t-1 , 1 \leq j\leq r'\}\] is algebraically independent. 

    Let $W = (w_{i,j})$ be an $(n-t+1) \times r'$ matrix of intdeterminates, and let $Y = (y_{i,j})$ be a $(t-1) \times (n-1)$ matrix of indeterminates. Then, by Lemma \ref{lemmaInvertx11}, we have an isomorphism \[\varphi: (K[X][U])_{x_{1,n}}\rightarrow (K[Y][W])[x_{i,j} \; \vert \; j = n \text{ or } i = 1][u_{i,j} \; \vert \; 1\leq i\leq n+t-1, r'<j\leq r][x_{1,n}^{-1}],\] where $\varphi(\Delta_{i}')=\delta_i$ for $1 \leq i \leq r'$ and $\{\delta_i\; \vert \; 1 \leq i \leq r'\}$, the set of the maximal minors of $I_{t-1}(Y)$. Furthermore $\varphi(u_{i,j}') = w_{i,j}$ for $1 \leq j\leq r'$. This induces an isomorphism 
    \[(K[X][U]/J)_{x_{1,n}} \cong (K[Y][W]/(\mathfrak{b}:I_{t-1}(Y))[x_{i,j} \; \vert \; j = n \text{ or } i = 1][u_{i,j} \; \vert \; 1\leq i\leq n+t-1, r'<j\leq r][x_{1,n}^{-1}],\]
    where $\mathfrak{b}$ is the ideal generated by $W[\delta_i,\dots,\delta_{r'}]^T$. This proves that $(S/J)_{x_{1,n}}$ is (strongly) $F$-regular by induction.
     
     Next, to check condition $(2)$ of Glassbrenner's criterion, we must show that \[x_{1,n}(J^{[p]}:J) \not\subseteq \mathfrak{m}^{[p]}.\]  Crucially, by Lemma \ref{lemmaMain}, it is enough to show that $x_{1,n}(\mathfrak{a}^{[p]}:\mathfrak{a}) \not\subseteq m^{[p]}$, where the ideal $\mathfrak{a}$ is as in Remark \ref{remarkGenricFixGens}. We are now done by Lemma \ref{lemmaNotInGeneric}.
    \end{proof}

\begin{rem}
    Notice that the $F$-purity of generic links of generic determinantal rings defined by maximal minors directly follows from applying Lemma \ref{lemmaMain} along with Lemma \ref{lemmaNotInGeneric}. 
\end{rem}

\section*{Acknowledgments}
The authors would like to thank Linquan Ma for introducing us to questions on singularities of generic links. We also thank him for his patient guidance and careful reading of the manuscript. The authors also gratefully acknowledge Bernd Ulrich for several helpful discussions and for suggesting an improvement in one of our results. We thank the anonymous referee for their detailed comments and suggested corrections.

\end{document}